\newtheorem{theorem}{Theorem}[section]
\newtheorem{lemma}[theorem]{Lemma}
\theoremstyle{assumption}
\theoremstyle{definition}
\theoremstyle{remark}
\newtheorem{remark}[theorem]{Remark}
\numberwithin{equation}{section}
\newcommand{\eps}{\varepsilon} 
\newcommand{\norm}[1]{\Vert#1\Vert}
\newcommand{\abs}[1]{\left\vert#1\right\vert}
\newcommand{\inner}[1]{\left(#1\right)}
\newcommand{\comi}[1]{\left<#1\right>}
\newcommand{\normm}[1]{{ \vert\kern-0.25ex \vert\kern-0.25ex \vert #1 
		\vert\kern-0.25ex \vert\kern-0.25ex \vert}}
\def\@startsection#1#2#3#4#5#6{%
	\if@noskipsec \leavevmode \fi
	\par \@tempskipa #4\relax
	\@afterindentfalse
	\ifdim \@tempskipa <\z@ \@tempskipa -\@tempskipa \@afterindentfalse\fi
	\if@nobreak \everypar{}\else
	\addpenalty\@secpenalty\addvspace\@tempskipa\fi
	\@ifstar{\@dblarg{\@sect{#1}{\@m}{#3}{#4}{#5}{#6}}}%
	{\@dblarg{\@sect{#1}{#2}{#3}{#4}{#5}{#6}}}%
}
\def\@settitle{%
	\bgroup
	\centering
	\vglue1cm
	\fontseries{b}\selectfont
	\uppercasenonmath\@title
	\@title
	\vskip20pt plus 6pt minus 8pt
	\egroup
}
\def\@setauthors{%
	\begingroup
	\trivlist
	\centering \bfseries
	\normalsize\@topsep30\p@\relax
	\advance\@topsep by -\baselineskip
	\item\relax
	\andify\authors
	{\rmfamily\authors}%
	\endtrivlist
	\endgroup
}
\def\@setaddresses{\par
	\nobreak \begingroup
	\normalsize
	\def\author##1{\nobreak\addvspace\bigskipamount}%
	\def\\{\unskip, \ignorespaces}%
	\interlinepenalty\@M
	\def\address##1##2{\begingroup
		\par\addvspace\bigskipamount\noindent
		\@ifnotempty{##1}{(\ignorespaces##1\unskip) }%
		{\ignorespaces##2}\par\endgroup}%
	\def\curraddr##1##2{\begingroup
		\@ifnotempty{##2}{\nobreak\indent{\itshape Current address}%
			\@ifnotempty{##1}{, \ignorespaces##1\unskip}\/:\space
			##2\par}\endgroup}%
	\def\email##1##2{\begingroup
		\@ifnotempty{##2}{\nobreak\noindent{\itshape E-mail address}%
			\@ifnotempty{##1}{, \ignorespaces##1\unskip}\/: 
			##2\par}\endgroup}%
	\def\urladdr##1##2{\begingroup
		\@ifnotempty{##2}{\nobreak\indent{\itshape URL}%
			\@ifnotempty{##1}{, \ignorespaces##1\unskip}\/:\space
			\ttfamily##2\par}\endgroup}%
	\addresses
	\endgroup
}
\renewcommand\section{\@startsection{section}{1}{\z@}%
	{27pt plus 6pt minus 8pt}{14pt plus 6pt minus 8pt}
	{\center\normalfont\large\bfseries}}
\begin{document}
	
\title[MHD Boundary layer without viscosity]{Well-posedness in Sobolev spaces  of the two-dimensional MHD Boundary layer equations without   viscosity}

\author[W.-X. Li and R. Xu]{Wei-Xi Li  \and Rui Xu}
	
	\date{}

	\address[W.-X. Li]{ 
		School of Mathematics and Statistics,  
 and Computational Science Hubei Key Laboratory,   Wuhan University,  430072 Wuhan, China
	}

	\email{
		wei-xi.li@whu.edu.cn}
		
\address[R. Xu]{ 
		School of Mathematics and Statistics,     Wuhan University,  430072 Wuhan, China
	}		
\email{xurui218@whu.edu.cn}

\keywords{MHD boundary layer, well-posedness, Sobolev space}

\subjclass[2020]{35Q35, 76D10, 76W05}

\maketitle

\begin{abstract}
We consider the two-dimensional  MHD Boundary layer system without hydrodynamic viscosity,  and establish  the existence and uniqueness of solutions in Sobolev spaces  under the assumption that the tangential component of magnetic fields dominates.  This gives a complement to the previous works of Liu-Xie-Yang [Comm. Pure Appl. Math. 72 (2019)] and Liu-Wang-Xie-Yang [J. Funct. Anal. 279 (2020)], where the well-posedness theory was established for the MHD boundary layer systems with both viscosity and resistivity and with viscosity only, respectively.  
We use the pseudo-differential calculation,  to overcome a new difficulty arising from the treatment of boundary integrals due to the absence of the diffusion property for the velocity. 
 
\end{abstract}

\section{Introduction }

In this work we study the existence and uniqueness of solution to the  two-dimensional magnetohydrodynamic (MHD) boundary layer system  without viscosity which reads,  letting $\Omega: =\mathbb T\times\mathbb R_+=\big\{ (x,y)\in\mathbb R^2;\  x\in\mathbb T,  y>0\big \}$ be the fluid domain,  
\begin{equation}\label{mhdsys+++}
\left\{
\begin{aligned}
&\partial_t   u +(u\partial_x  +v\partial_{y}) u -(f\partial_x  +g\partial_{y} )f +\partial_x p=0,\\
&\partial_t   f +(u\partial_x f+v\partial_{y}) f-(f\partial_x  +g\partial_{y} )u-\mu\partial_{y}^{2} f=0,\\
& \partial_t g +(u\partial_x +v\partial_y )g  -\partial_y^2 g= f\partial_xv -g\partial_xu,\\
&\partial_x u+\partial_{y} v=0,\quad \partial_x f+\partial_{y} g=0, \\
&\big( v, \partial_{ y}f, g\big)|_{ y=0}= (0, 0,0),  \quad \lim_{y\rightarrow +\infty}(u,f)=(U,B),\\
&u|_{ t=0}=u_{0},\quad f|_{ t=0}=f_{0}\\
\end{aligned}
\right.
\end{equation}
where $ (u, v) $ and $(f, g)$ stand for  
velocity and magnetic fields, respectively,  and $\mu$  is resistivity coefficients,    and  $U, B$ and $p$ are the values on the boundary of the tangential velocity, magnetic fields and pressure, respectively, in the ideal MHD system satisfying the Bernoulli's law:
 \begin{equation}\label{berlaw}
\left\{
\begin{aligned}
&\partial_t U + U \partial_x U+\partial_x p=B \partial_x B,\\
&\partial_t B + U \partial_x B=B \partial_x U.\\
\end{aligned}
\right.
\end{equation}
Note the MHD boundary system with a nonzero hydrodynamic viscosity will reduce to the classical Prandtl equations in the absence of a magnetic field, and the main difficulty for investigating Prandtl equation lies in  the nonlocal property coupled with the loss of one order tangential derivative when dealing with the terms $v \partial_y u$.   The mathematical study on   the Prandtl boundary layer has a  long history, and  there have been extensive works concerning its well/ill-posedness theories.   So far  the two-dimensional  (2D) Prandtl equation  is  well-explored in various function spaces,  see, e.g., \cite{MR3327535, MR3795028,  MR3925144,  MR1476316, MR2601044, MR3429469, MR2849481,  MR3461362, MR3284569,     MR3493958, MR4055987,  MR2020656, MR3710703,MR3464051} and the references therein.    
Compared with the Prandtl equation the treatment is more complicated since we have a new difficulty caused by the additional loss of tangential derivative in the magnetic field.   So far the MHD boundary layer system  is mainly  explored in the two settings. 

\begin{itemize}
\item   Without any structural assumption on initial data the well-posedness for 2D and 3D MHD boundary systems  was   established in Gevrey space by   the first author and T. Yang  \cite{2020arXiv200906513L}  with Gevrey index   up to $3/2$, and it remains interesting to relax the Gevrey index therein to $2$ inspired the previous works of  \cite{MR3925144,lmy}  on the well-posedness for the  Prandtl equations  in Gevrey space with optimal index $2$.  

	\item Under the structural assumption that  the tangential  magnetic field dominates, i.e.,    $f\neq 0$,  the well-posedness in weighted Sobolev space was established by   Liu-Xie-Yang  \cite{MR3882222} and Liu-Wang-Xie-Yang \cite{MR4102162}  without Oleinik's monotonicity assumption,  where the two cases  that with both viscosity and resistivity  and  with only viscosity    are considered, respectively.  These works,  together with the essential role of the Oleinik's monotonicity   for  well-posedness theory of the Prandtl equations (see, e.g., \cite{MR1697762,MR3327535,MR3385340}),   justify  the stabilizing effect of the magnetic field on MHD boundary layer,  no matter whether or not there is resistivity in the magnetic boundary layer equation
\end{itemize}

The aforementioned works \cite{MR3882222, MR4102162} investigated the well-posedness  for MHD  boundary layer system  with  the nonzero   viscosity coefficient. This work aims to consider the case without   viscosity coefficient,  giving  a complement to the previous works \cite{MR3882222, MR4102162}.     To simply the argument we will assume without loss of generality   that $ \mu=1 $ and $( U,  B)\equiv  0$ in  the system \eqref{mhdsys+++} since the result will hold true in the general case if we use some kind of the nontrivial weighted
functions similar to those used in  for the Prandtl equation.   Hence  we consider the following    2D MHD boundary layer system in the region $\Omega=\mathbb T\times\mathbb R_+$
\begin{equation}
\label{mhd+}
\left\{
\begin{aligned}
&\big(\partial_t  +u\partial_x +v\partial_y \big) u-(f\partial_x+g\partial_y)f=0,\\
&\big(\partial_t  +u\partial_x +v\partial_y   -\partial_y^2\big) f-(f\partial_x+g\partial_y)u=0,\\
&\big(\partial_t  +u\partial_x +v\partial_y   -\partial_y^2\big) g= f\partial_xv -g\partial_xu,\\
&\partial_xu+\partial_yv=\partial_xf+\partial_yg=0,\\
& (v,\partial_yf,g)|_{y=0}=(0,0,0), \quad (u, f)|_{y\rightarrow +\infty}=(0,0),\\
& (u,f)|_{t=0}=(u_0, f_0).
\end{aligned}
\right.
\end{equation} 
 By the boundary condition and divergence-free condition above,  we have
\begin{equation*}
v(t,x,y)=- \int_0^y  \partial_x u(t,x,\tilde y) d\tilde y,\quad g(t,x,y)=-\int_0^y  \partial_x f(t,x,\tilde y) d\tilde y.
\end{equation*}
We remark that the  equation for $g$   in \eqref{mhd+}  can be derived from the one for $f$
and    the main difficulty in analysis is the loss of $x$-derivatives in the two terms $v$ and $g.$   As to be seen in the next Section \ref{secderivation}, 
The system \eqref{mhd+} can be  derived from the MHD system 
\begin{equation}\label{mhdv}
\left\{
\begin{aligned}
&\partial_t \bm  u^{\varepsilon} +(\bm  u^{\varepsilon} \cdot\nabla) \bm u^{\varepsilon}-(\bm H^{\varepsilon} \cdot\nabla) \bm H^{\varepsilon} +\nabla   P^{\varepsilon} =0,\\
&\partial_t \bm  H^{\varepsilon}  +(\bm u^{\varepsilon}\cdot\nabla)\bm H^{\varepsilon} =(\bm H^{\varepsilon}\cdot\nabla)\bm u^{\varepsilon}+\mu\varepsilon \Delta \bm H^{\varepsilon},\\
&\nabla\cdot \bm u^{\varepsilon}=\nabla\cdot \bm H^{\varepsilon}=0,\\
\end{aligned}
\right.
\end{equation}
where   $ \bm u^{\varepsilon}=( u^{\varepsilon},   v^{\varepsilon}) , \bm H^{\varepsilon}=(f^{\varepsilon},   g^{\varepsilon})$ denote velocity and magnetic field, respectively.  The MHD system \eqref{mhdv} is complemented with   the   boundary condition that 
\begin{equation*}\label{cond2}
 v^{\varepsilon}|_{y=0}= 0,  \quad \bm (\partial_{y} f^{\varepsilon},   \ g^{\varepsilon}) |_{y=0}=(0, 0).
\end{equation*}
It is an important  issue in both mathematics and physics to ask  the high Reynolds number limit for MHD systems, and  so far it is   justified mathematically  by Liu-Xie-Yang \cite{MR3975147} with the presence of viscosity and the other cases remain unclear.       

{\bf Notation}. Before stating the main result we first list some notation used frequently in this paper.   Given  the domain $\Omega=\mathbb T\times\mathbb R_+$,   we will use  $\norm{\cdot}_{L^2}$ and $\inner{\cdot, \cdot}_{L^2}$ to denote the norm and inner product of  $L^2=L^2(\Omega)$   and use the notation   $\norm{\cdot}_{L_x^2}$ and $\inner{\cdot, \cdot}_{L_x^2}$  when the variable $x$ is specified. Similar notation  will be used for $L^\infty$.  Moreover, we use $L^p_x(L^q_y) = L^p (\mathbb T; L^q(\mathbb R_+))$ for the classical Sobolev space.
Let   $H^m=H^m(\Omega)$  be the standard Sobolev space and define the weighted Sobolev space $H_\ell^m$  by setting,  for  $ \ell\in \mathbb{R} $,
\begin{equation*} 
H_\ell^m=\Big\{f(x,y):\Omega\rightarrow\mathbb{R}; \quad \norm{f}_{H^{m}_{\ell}}^{2}:=
\sum _ { i+j \leq m } \norm{\comi{y} ^ { \ell +j } \partial _ { x } ^ {i } \partial _ { y } ^ { j } f ( x , y )}  ^{2}_{L^{2}}< +\infty \Big\},
\end{equation*}
where here and below $\comi{y}=\big(1+\abs y^2\big)^{1/2}.$
With the above notation the well-posedness theory of  \eqref{mhdsys+++} in weighted Sobolev space can be stated as below. Here the main assumption is that  the tangential magnetic field in \eqref{mhdsys+++}  dominates, that is, $f\neq 0$.

\begin{theorem}\label{pro1+}
	Let $ \ell >\frac 12 $ and $\delta>\ell+\frac 12$ be two given  numbers. Suppose the initial data $  u_{0}, f_{0}$ of  \eqref{mhd+}  lie in $H^{4}_{\ell}(\Omega)$ satisfying   that there exists a constant $c_0>0$  such that 
	for any $(x,y)\in\Omega$,
	 \begin{equation} \label{inco}
   	 	    f_0(x,y) \geq  c_0  \comi y^{-\delta}   \ {\rm  and } \   \sum_{j\leq 2} \abs{\partial_y^j f_0(x,y)}\leq  c_0^{-1} \comi y^{-\delta-j}.
      \end{equation}
 Then  the MHD boundary layer system \eqref{mhd+} admits  a     unique local-in-time solution 
	\begin{equation*} 
	 u,f\in  L^{\infty}([0,T];{H}^{4}_{\ell})
	\end{equation*}	
	for some $T>0$.  Moreover a constant $c>0$ exists such that for any $(t,x,y)\in [0,T]\times \Omega$,    
	\begin{eqnarray*}
	   f(t, x,y) \geq  c  \comi y^{-\delta}   \ {\rm  and } \   \sum_{j\leq 2} \abs{\partial_y^j f(t, x,y)}\leq  c^{-1} \comi y^{-\delta-j}.	
	\end{eqnarray*}
	 \end{theorem}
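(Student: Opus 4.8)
The plan is to reduce the theorem to two uniform-in-time a priori statements --- a weighted energy estimate in $H^4_\ell$ and the propagation of the pointwise bounds on $f$ --- and then to construct the solution by a parabolic regularization plus a compactness argument, uniqueness being obtained from a difference estimate. I would first dispatch the pointwise part: writing the equations satisfied by $\comi y^{\delta}f$ and, for $j\le2$, by $\comi y^{\delta+j}\partial_y^jf$, one finds convection--diffusion equations whose zeroth-order coefficients (produced by commuting the weight and by the right-hand side $f\partial_xu+g\partial_yu$ and its $\partial_y$-derivatives) are bounded once $\norm{(\partial_xu,\partial_yu,\partial_xf,\partial_yf)}_{L^\infty}$ is controlled by $\norm{(u,f)}_{H^4_\ell}$ through Sobolev embedding; a maximum-principle comparison then shows that \eqref{inco} self-improves, so the stated bounds, in particular $f|_{y=0}>0$, persist on any interval on which the $H^4_\ell$-norm stays finite. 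This leaves the energy estimate as the whole content.

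For the energy estimate, the obstruction is the loss of one tangential derivative carried by $v=-\int_0^y\partial_xu\,d\tilde y$ and $g=-\int_0^y\partial_xf\,d\tilde y$, which enter the momentum equation through $v\partial_yu,\ g\partial_yf$ and the magnetic equation through $v\partial_yf,\ g\partial_yu$. Following \cite{MR3882222,MR4102162} I would pass to the vorticity $\omega=\partial_yu$ and the current $j=\partial_yf$, where the Prandtl-type cancellation $\partial_y(u\partial_xu+v\partial_yu)=u\partial_x\omega+v\partial_y\omega$ (and its magnetic analogue) lowers the order of the $v\partial_y$- and $g\partial_y$-contributions to the borderline level, and then introduce good unknowns adapted to the nondegeneracy $f\neq0$ --- roughly, subtracting from $\omega$ suitable multiples (by $\partial_yf/f$ and by $f$) of primitives of $u$ and $f$, or equivalently dividing the magnetic equation by $f$ --- so that the residual borderline terms coming from $v$ and $g$ appear antisymmetrically and cancel after integration by parts in $x$. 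Applying $\partial_x^i\partial_y^j$ with $i+j\le3$ to the equations for these good unknowns, testing against the corresponding weighted quantities, and estimating every commutator by Sobolev embeddings, the fundamental theorem of calculus (using $\ell>\tfrac12$ to control the primitives against the weights) and the pointwise $f$-bounds, one obtains a closed estimate in which the magnetic diffusion $-\partial_y^2j$ supplies the only genuinely dissipative term.

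The crux --- and the place where the present setting departs from \cite{MR3882222,MR4102162} --- is the boundary integrals at $y=0$. Because the momentum equation carries no $\partial_y^2u$, the energy identity for the velocity block produces no dissipative term capable of absorbing the traces generated by the integrations by parts in $y$ (and $x$); only the magnetic block carries the smoothing $\partial_y^2f$, and that controls only traces of $f$-derivatives, which moreover vanish thanks to $\partial_yf|_{y=0}=0$. One is thus left with boundary quadratic forms in the traces of high-order tangential derivatives of $u$, which are not dominated by the interior norm in any naive way. To handle them I would use tangential pseudo-differential calculus: run the energy estimate on $\comi{D_x}^{s}$ applied to the good unknowns for a suitable order $s$, and exploit the strict positivity $f|_{y=0}>0$ together with a G\r{a}rding-type / symbolic-positivity argument to show that the resulting boundary quadratic form is, up to lower-order terms, of a favourable sign, so that it may be kept on the good side of the inequality; this amounts to trading the missing velocity diffusion against the magnetic diffusion through the boundary coupling, and it is the step I expect to require the most care.

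Assembling these bounds yields a differential inequality $\frac{d}{dt}\norm{(u,f)}_{H^4_\ell}^2\le C\,\mathcal P\!\big(\norm{(u,f)}_{H^4_\ell}\big)$, valid as long as the $f$-bounds hold, and a standard continuation argument produces $T>0$ and the asserted uniform bound. To build an actual solution I would regularize the momentum equation by adding $\eps\partial_y^2u$ (with, say, the compatible boundary condition $\partial_yu|_{y=0}=0$), for which classical parabolic theory gives local solutions; the estimates above go through uniformly in $\eps$ --- the extra term $\eps\partial_y^2u$ contributes a boundary term of favourable sign and an interior term that is simply discarded --- and letting $\eps\to0$ with Aubin--Lions compactness gives a solution of \eqref{mhd+} inheriting the $H^4_\ell$-regularity and the pointwise bounds. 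Uniqueness follows by estimating the difference of two solutions in a weaker weighted $L^2$-type norm, the same cancellation structure again being used to absorb the loss of one tangential derivative.
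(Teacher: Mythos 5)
Your overall skeleton matches the paper's: reduce Theorem~\ref{pro1+} to an a~priori $H^4_\ell$-energy estimate together with propagation of the pointwise $f$-bounds via the maximum principle, close the energy estimate by introducing good unknowns adapted to $f\neq0$ so that the losses of one tangential derivative in $v,g$ cancel by antisymmetry, and produce/uniquify the solution by regularization and a difference estimate. Your good unknowns are phrased at the level of $\omega=\partial_yu$, $j=\partial_yf$, whereas the paper works directly with $\partial_x^4u,\partial_x^4f$ and sets $\psi=\partial_x^4f+\frac{\partial_yf}{f}\partial_x^3g$, $\varphi=\partial_x^4u+\frac{\partial_yu}{f}\partial_x^3g$; this is the same Liu--Xie--Yang cancellation mechanism in a different dress and is not an issue.

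The genuine gap is in the boundary integral, which you correctly identify as the crux but misdiagnose and then propose to close by a mechanism that does not apply. First, the diagnosis: the only boundary term comes from the magnetic diffusion $-\partial_y^2f$ (the velocity block, having no diffusion, produces no trace at all), and it is $-\int_{\mathbb T}(\partial_y^4f)\,\partial_y^5f\big|_{y=0}\,dx$; it does \emph{not} vanish under $\partial_yf|_{y=0}=0$, because the odd-order traces $\partial_y^3f|_{y=0}$ and $\partial_y^5f|_{y=0}$ are nonzero and must be computed from the equation. After that substitution the leading contribution is the \emph{bilinear} $f$--$u$ term $\int_{\mathbb T}f(\partial_y^4f)(\partial_x\partial_y^3u)\big|_{y=0}\,dx$, not a ``boundary quadratic form in traces of high-order tangential derivatives of $u$.'' Second, the mechanism: this integrand has no sign structure (neither $\partial_y^4f$ nor $\partial_x\partial_y^3u$ has a sign, and $f>0$ alone does not help), so a G\aa rding / symbolic-positivity argument has nothing to bite on and would not close the estimate. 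What the paper actually does is an absorption argument: split the single $\partial_x$ as $\Lambda_x^{1/2}\cdot\Lambda_x^{1/2}$, use the commutator estimate $\|[\,|D_x|^{1/2},\rho]w\|_{L^2_x}\lesssim(\|\rho\|_{L^\infty}+\|\partial_x\rho\|_{L^\infty})\|w\|_{L^2_x}$ to pull $f$ out, and then invoke the one-dimensional trace inequalities $\|\Lambda_x^{1/2}\partial_y^3u|_{y=0}\|_{L^2_x}^2\lesssim\mathcal E$ and $\|\Lambda_x^{1/2}\partial_y^4f|_{y=0}\|_{L^2_x}^2\lesssim\mathcal D$ (obtained by writing $|w(x,0)|^2=-2\int_0^\infty w\,\partial_yw\,dy$). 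Young's inequality then bounds the boundary integral by $\eps\mathcal D+C_\eps\mathcal E^2$, with the $\eps\mathcal D$ absorbed into the magnetic dissipation. This asymmetric distribution of the tangential half-derivative --- the $u$-trace needs only $\mathcal E$ while the $f$-trace needs $\mathcal D$ --- is precisely how the missing velocity diffusion is traded against the magnetic one, and it is an interpolation/trace estimate, not a sign argument. As written, your Step on the boundary integral would not go through and needs to be replaced by this calculation.
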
 
 
 \begin{remark}
 	The result above confirms  that the magnetic field may act as a stabilizing factor on MHD boundary layer.  The stabilizing effect was justified by \cite{MR3882222} for the case with both viscosity and resistivity,  and by \cite{MR4102162}  for the case without resistivity.
 \end{remark}

 \section{Derivation of the boundary layer system}\label{secderivation}

This section is devoted to deriving the boundary layer system \eqref{mhdsys+++}.  We consider the MHD system in $\Omega$
\begin{equation}\label{mhdsys}
\left\{
\begin{aligned}
&\partial_t \bm  u^{\varepsilon} +(\bm  u^{\varepsilon} \cdot\nabla) \bm u^{\varepsilon}-(\bm H^{\varepsilon} \cdot\nabla) \bm H^{\varepsilon} +\nabla   p^{\varepsilon} =0,\\
&\partial_t \bm  H^{\varepsilon}  +(\bm u^{\varepsilon}\cdot\nabla)\bm H^{\varepsilon} -(\bm H^{\varepsilon}\cdot\nabla)\bm u^{\varepsilon}-\mu\varepsilon \Delta \bm H^{\varepsilon}=0,\\
&\nabla\cdot \bm u^{\varepsilon}=\nabla\cdot \bm H^{\varepsilon}=0,\\
&{\bm u^{\varepsilon}}|_{t=0}= {\bm u_0} ,\quad {\bm H^{\varepsilon}}|_{t=0}={\bm b_{0}},
\end{aligned}
\right.
\end{equation}
where $ \bm u^{\varepsilon}=(u^{\varepsilon},v^{\varepsilon}), \bm H^{\varepsilon}=(f^{\varepsilon},g^{\varepsilon}) $ denote velocity and  magnetic fields, respectively.  The above system is complemented with   the no-slip boundary condition on the normal component of velocity field and perfectly conducting boundary condition on the magnetic field,   that is,
\begin{equation}\label{cond2}
 v^{\varepsilon}|_{y=0}= (0),\quad (\partial_{y} f^{\varepsilon},  g^{\varepsilon}) |_{y=0}=(0,0).
\end{equation}
A boundary
layer  will appear in order to overcome  a mismatch on the boundary $ y=0 $ for the tangential magnetic fields between  \eqref{mhdsys}
 and the limiting equations by letting $\eps\rightarrow 0.$      To derive the governing equations for boundary layers  we consider the ansatz

\begin{equation}\label{expan}
\left\{
\begin{aligned}
	 &u ^ { \varepsilon } ( t , x , y ) = u ^ { 0 } ( t , x , y ) + u ^ { b } ( t , x , \tilde { y } ) + O ( \sqrt { \varepsilon }  ) ,\\ 
	 &v ^ { \varepsilon } ( t , x , y ) = v ^ { 0 } ( t , x , y ) + \sqrt { \varepsilon }   v ^ { b } ( t , x , \tilde { y } )   + O (  \varepsilon   ), \\ 
	 &f ^ { \varepsilon } ( t , x , y ) = f ^ { 0 } ( t , x , y ) + f ^ { b } ( t , x , \tilde { y } ) + O ( \sqrt { \varepsilon }  ), \\ 
	 &g ^ { \varepsilon } ( t , x , y ) = g ^ { 0 } ( t , x , y ) + \sqrt { \varepsilon }   g ^ { b } ( t , x , \tilde { y } )  + O (   \varepsilon  ), \\
	 &p ^ { \varepsilon } ( t , x , y ) = p ^ { 0 } ( t , x , y ) + p ^ { b } ( t , x , \tilde { y } ) + O ( \sqrt { \varepsilon }  ) ,
\end{aligned}
\right.
\end{equation}
where  we used the notation $ \tilde{ y } =y/ \sqrt{\varepsilon}$.   We suppose  $u^b, f^b, $ and $p^b$  in the expansion \eqref{expan}
polynomially trend  to zero as $\tilde y\rightarrow+\infty$, that is, as $\eps\rightarrow 0.$  Similarly for the expansion of the initial data.

\subsection*{Boundary conditions} 

Taking trace on $y=0$ for the second and the fourth expansions in \eqref{expan} and recalling the boundary condition \eqref{cond2},   we derive that 
\begin{equation}\label{bv0}
	v^0|_{y=0}=g^0|_{y=0}=0,
\end{equation} 
and using again the second and the fourth equations in \eqref{expan} and letting $\eps\rightarrow 0$, we get that
\begin{equation}\label{bv1}
	v^b|_{y=0}=g^b|_{y=0}=0.
\end{equation} 
Moreover observe 
\begin{eqnarray*}
	0=\partial_y f^\eps|_{y=0} =\partial_y f^0 |_{y=0}+\frac{1}{\sqrt \eps} \partial_{\tilde y}f^b|_{\tilde y=0}+o(1).
\end{eqnarray*}
This gives
\begin{equation}\label{bv2}
	\partial_{\tilde y}f^b|_{\tilde y=0}=0.
\end{equation}

\subsection*{The governing equations of the fluid behavior near and far from the boundary} We substitute the ansatz \eqref{expan} into \eqref{mhdsys} and  consider the order of $\eps$.
At the order $\eps^{-1/2}$   we get
\begin{equation*}
	\partial_{\tilde y} p^b\equiv 0.
\end{equation*}
This with the assumption that $p^b$ goes to $0$ as $\tilde y\rightarrow+\infty$ implies
\begin{equation}\label{np}
	p^b\equiv 0.
\end{equation} 
At the order $\eps^0$, letting 
 $\tilde y\rightarrow+\infty $ $ (\eps\rightarrow 0)$   and taking into account \eqref{bv0} and fact that   $u^b, f^b, $ and $p^b$   polynomially 
  trend  to zero  as $\eps\rightarrow 0$,  we see the limiting   system  is the ideal
incompressible MHD system:
\begin{equation}\label{imhdsys}
\left\{
\begin{aligned}
&\partial_t \bm  u^{0} +(\bm  u^{0} \cdot\nabla) \bm u^{0}-(\bm H^{0} \cdot\nabla) \bm H^{0} +\nabla   p^{0}=0,\\
&\partial_t \bm H^{0}  +(\bm u^{0}\cdot\nabla)\bm H^{0} -(\bm H^{0}\cdot\nabla)\bm u^{0}=0,\\
&\nabla\cdot \bm u^{0}=\nabla\cdot \bm H^{0}=0,\\
\end{aligned}
\right.
\end{equation}   
complemented with the boundary condition \eqref{bv0} and initial data $\bm u^0_{in}$ and $\bm H^0_{in}$, 
    where $\bm  u^{0}=(u^0, v^0), \bm H^{0}=( f^0, g^0)$. 
    
    Next we will derive the boundary layer equations.  Let  $\bm  u^{0}=(u^0, v^0), \bm H^{0}=( f^0, g^0)$ be the solution to the ideal MHD system \eqref{imhdsys}.   By Taylor expansion  we write $u^0(t,x,y)$ as  
    \begin{eqnarray*}
    	u^0(t,x,y)=u^0(t,x,0)+y\partial_y u^0(t,x,0)+\frac{y^2}{2}\partial_y^2 u^0(t,x,0)+\cdots=\overline{u^0}+\sqrt{\eps}\tilde {y} \overline{\partial_y u^0}+O(\eps),
    \end{eqnarray*}
   where here and below we use the notation $ \overline{h}$
to stand for the trace of a function $ h $ on the
boundary $ {y = 0} $.  Similarly,
    \begin{equation*}
    \begin{aligned}
    	& v^0(t,x,y) =\sqrt{\eps}\tilde {y} \overline{\partial_y v^0}+O(\eps),\quad f^0(t,x,y)= \overline{f^0}+\sqrt{\eps}\tilde {y} \overline{\partial_y f^0}+O(\eps),\\
    	&g^0(t,x,y)=  \sqrt{\eps}\tilde {y} \overline{\partial_y g^0}+O(\eps), \quad 	p^0(t,x,y) =\overline{p^0}+\sqrt{\eps}\tilde {y} \overline{\partial_y g^0}+O (\eps).\\
\end{aligned}
\end{equation*}
Now we compare  the order $\eps^0$  for the resulting equation by   substituting   the ansatz \eqref{expan}  as well as    the above Taylor expansion of $\bm  u^{0}=(u^0, v^0),  \bm H^{0}=( f^0, g^0)$  into \eqref{mhdsys}; this gives, by virtue of  \eqref{np}  and \eqref{imhdsys} ,  
\begin{equation}\label{limeq}
\left\{
\begin{aligned}
&\partial_t(\overline{u^0}+u^b)+(\overline{u^0}+u^b)\partial_x(\overline{u^0}+u^b)+(\tilde {y}\cdot \overline{\partial_y v^0}+v^b)\cdot\partial_{\tilde{y}}u^b\\
&\qquad-(\overline{f^0}+f^b)\partial_x(\overline{f^0}+f^b)-(\tilde {y}\cdot \overline{\partial_y g^0}+g^b)\cdot\partial_{\tilde{y}}f^b+\partial_x \overline{p^0} =0,
\\&
\partial_t(\overline{f^0}+f^b)+(\overline{u^0}+u^b)\partial_x(\overline{f^0}+f^b)+(\tilde {y}\cdot \overline{\partial_y v^0}+v^b)\cdot\partial_{\tilde{y}}f^b\\&\qquad-(\overline{f^0}+f^b)\partial_x(\overline{u^0}+u^b)-(\tilde {y}\cdot \overline{\partial_y g^0}+g^b)\cdot\partial_{\tilde{y}}u^b -\mu\partial_{\tilde{ y }}^{2}f^b=0,\\
& \partial_x (\overline{u^0}+u^b)+\partial_y (\tilde {y}\cdot \overline{\partial_y v^0}+v^b)=\partial_x (\overline{f^0}+f^b)+\partial_y (\tilde {y}\cdot \overline{\partial_y g^0}+g^b)=0.
\end{aligned}
\right.
\end{equation}
Denoting
\begin{equation*}
\begin{aligned}
&u ( t , x , \tilde y  ) =\overline{ u ^ { 0 }}  + u ^ { b } ( t , x , \tilde { y } ) ,\quad v ( t , x ,\tilde y ) =\tilde y \partial_{y} \overline{v ^ { 0 } } + v ^ { b } ( t , x , \tilde { y } ),  \\ 
&f  ( t , x ,\tilde y ) =\overline{ f ^ { 0 } } + f ^ { b } ( t , x , \tilde { y } ) ,\quad g  ( t , x ,\tilde y ) = \tilde y \partial_{y}\overline{g ^ { 0 } } + g ^ { b } ( t , x , \tilde { y } ),
\end{aligned}
\end{equation*}
and recalling $u ^ { b }, f^b$ polynomially  trend to $0$ as $\tilde y \rightarrow +\infty$,  
we combine \eqref{limeq}  and the boundary conditions \eqref{bv0}--\eqref{bv2} to conclude that all equations except the third one  in \eqref{mhdsys+++} are fulfilled by  $ u, v, f,$ and $g$. For simplicity of notation, we have replaced $ \tilde{y}  $ by $ y $. Note that the third equation in \eqref{mhdsys+++} can be derived from  the second one and the boundary condition $\partial_yf|_{y=0}=0$ by observing that
\begin{eqnarray*}
	g(t,x,y)=-\int_0^y \partial_x f (t,x,z)dz.
\end{eqnarray*}
Finally we remark   
 the Bernoulli's law \eqref{berlaw} follows by taking trace on $y=0$ for the ideal MHD system \eqref{imhdsys}.

\section{A priori  energy estimates}\label{sec}

The general strategy for constructing  solutions to   \eqref{mhd+} involves mainly two  ingredients. One is to construct appropriate  approximate solutions, which reserve a similar properties as \eqref{inco} for initial data  by applying  the standard  maximum principle for parabolic equations in the domain $\Omega$ (see \cite[Lemmas E.1  and E.2 ]{MR3385340} for instance).  Next   we need to deduce the uniform estimate   for these approximate solutions.    For sake of  simplicity we only present the following a priori estimate for regular solutions,  which is a key part to prove the main result Theorem \ref{pro1+}.  

\begin{theorem}\label{thma}
Let $ \ell >\frac 12 $ and $\delta>\ell+\frac 12$ be two given  numbers, and let $u,f\in L^\infty([0,T];  H_\ell^4)$ solve the MHD boundary layer system \eqref{mhd+} satisfying that a constant $c>0$ exists such that for any $(t,x,y)\in [0,T]\times \Omega$,    
	\begin{eqnarray*}
	   f(t, x,y) \geq  c  \comi y^{-\delta}   \ {\rm  and } \   \sum_{j\leq 2} \abs{\partial_y^j f(t, x,y)}\leq  c^{-1} \comi y^{-\delta-j}.	
	\end{eqnarray*}
Then there exists a constant $C>0$ such that
	\begin{eqnarray*}
		\mathcal E(t)+\int_0^t\mathcal D(s)ds \leq C \Big(\mathcal E(0)+\int_0^t \inner{\mathcal E(s)+\mathcal E(s)^2}ds\Big),
	\end{eqnarray*}
	where here and below 
	\begin{equation}\label{noes}
		\mathcal E(t):=\norm{u(t)}_{H_\ell^4}^2+\norm{f(t)}_{H_\ell^4}^2 ,\quad 
 \mathcal D(t) :=   \norm{  \partial_y f(t) } _ { H_\ell^4 } ^ { 2 }.
	\end{equation}
\end{theorem}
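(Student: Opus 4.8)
The plan is to run a weighted energy estimate directly on the system \eqref{mhd+} in the space $H^4_\ell$, the dissipation $\mathcal D$ being produced by the only parabolic term present, namely $-\partial_y^2 f$. Concretely, for every pair $(i,j)$ with $i+j\le 4$ I would apply $\partial_x^i\partial_y^j$ to the first two equations of \eqref{mhd+}, test the resulting identities against $\comi y^{2(\ell+j)}\partial_x^i\partial_y^j u$ and $\comi y^{2(\ell+j)}\partial_x^i\partial_y^j f$, integrate over $\Omega$ and sum over $(i,j)$. Integrating $-\partial_y^2 f$ by parts once yields $\norm{\comi y^{\ell+j}\partial_x^i\partial_y^{j+1}f}_{L^2}^2$, whose sum is $\mathcal D(t)$ up to a weight commutator $\int\partial_x^i\partial_y^{j+1}f\,\partial_x^i\partial_y^j f\,\comi y^{2(\ell+j)-1}$ absorbed into $\tfrac12\mathcal D+C\mathcal E$; its boundary contribution at $y=0$ vanishes whenever $j\le1$, since $\partial_yf|_{y=0}=0$ forces $\partial_x^i\partial_yf|_{y=0}=0$, and for $j\ge2$ it is retained for the last step. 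All remaining contributions split into two groups. The first, ``tame'' group consists of the terms in which at most three $x$-derivatives fall on $v=-\int_0^y\partial_xu\,d\tilde y$ or on $g=-\int_0^y\partial_xf\,d\tilde y$, the terms in which a $\partial_y$ can be moved off $v$ or $g$ via $\partial_yv=-\partial_xu$, $\partial_yg=-\partial_xf$, and all products with a factor $\partial_x^{i'}\partial_y^{j'}f$, $i'+j'\le4$, whose complementary factor has order $\le2$. These are bounded by Hölder's inequality, the embedding $H^2(\Omega)\hookrightarrow L^\infty(\Omega)$, the pointwise bounds \eqref{inco} (which give $1/f\le c^{-1}\comi y^{\delta}$ and hence control of $v/f$, $g/f$, $\partial_yu/f$ in the relevant weighted norms), and weighted Hardy inequalities for $\int_0^y$, for which the hypotheses $\ell>\tfrac12$ and $\delta>\ell+\tfrac12$ are precisely what is needed; altogether this group is $\le C(\mathcal E+\mathcal E^2)$.

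The second group contains the genuinely dangerous terms, which appear only for $i=4,\ j=0$: the contribution of $(\partial_x^4v)\partial_yu-(\partial_x^4g)\partial_yf$ from the $u$-equation and of $(\partial_x^4v)\partial_yf-(\partial_x^4g)\partial_yu$ from the $f$-equation, each carrying $\partial_x^5(u,f)$ through $\partial_x^4v=-\int_0^y\partial_x^5u\,d\tilde y$, $\partial_x^4g=-\int_0^y\partial_x^5f\,d\tilde y$. To control these I would use the structural assumption that the tangential magnetic field dominates, $f\ge c\comi y^{-\delta}>0$ on $[0,T]\times\Omega$, following the cancellation mechanism of Liu--Xie--Yang \cite{MR3882222} and Liu--Wang--Xie--Yang \cite{MR4102162}. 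Since $\partial_x^4v$ and $\partial_x^4g$ enter the two equations through the \emph{same} pair of coefficients, and since the equation for $g$ in \eqref{mhd+} carries exactly $f\,\partial_x^4v$ at top order while $\partial_x^3g=-\int_0^y\partial_x^4f\,d\tilde y$ stays at the $H^4_\ell$ level, one passes to good unknowns of the schematic form
\[
 \mathfrak u:=\partial_x^4 u+\frac{\partial_y u}{f}\,\partial_x^3 g,\qquad \mathfrak f:=\partial_x^4 f+\frac{\partial_y f}{f}\,\partial_x^3 g,
\]
whose coefficients are controlled by \eqref{inco}; a direct computation shows that $\partial_t\mathfrak u$ and $\partial_t\mathfrak f$ no longer see $\partial_x^4v$ at top order, and one recovers $\norm{\partial_x^4 u}_{L^2_\ell}$, $\norm{\partial_x^4 f}_{L^2_\ell}$ from $\norm{\mathfrak u}_{L^2_\ell}$, $\norm{\mathfrak f}_{L^2_\ell}$. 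The residual top-order terms, now proportional to $\partial_x^4 g$, are reorganized by integration by parts in $x$ (no boundary term, $x\in\mathbb T$) and in $y$, using $\partial_y(\partial_x^4 g)=-\partial_x^5 f$ together with the dissipation $\mathcal D$.

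The main obstacle — the point at which the ``no viscosity'' setting genuinely departs from \cite{MR3882222,MR4102162} — is that these integrations by parts in $y$, as well as the $j\ge2$ integrations from the first paragraph, leave boundary integrals on $\{y=0\}$ that are quadratic expressions in the traces $\partial_x^ku|_{y=0}$, $k\le4$ (and lower-order traces of $f$), and, since there is no hydrodynamic viscosity, there is no dissipation of $u$ to absorb them through a trace inequality, as was possible when $-\partial_y^2u$ was present. To handle them I would localize near $y=0$, reduce the estimate to the half-space $\{y>0\}$, and work with tangential Fourier multipliers / pseudo-differential operators $\Lambda_x=(1-\partial_x^2)^{1/2}$: rewriting the boundary integrals by means of the equation for $u$ restricted to $y=0$, which because $v|_{y=0}=g|_{y=0}=0$ reduces to $\partial_t u+u\partial_x u-f\partial_x f=0$ there, and exploiting $f|_{y=0}\ge c>0$, a G\aa rding-type inequality together with a commutator estimate for $[\Lambda_x^4,\,f|_{y=0}]$ recovers a bound for $\norm{\partial_x^4 u(t)|_{y=0}}_{L_x^2}$ in terms of $\mathcal E$, a small multiple of $\mathcal D$, and the tame quantities — enough to close all boundary terms. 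Combining the three parts gives $\frac{d}{dt}\mathcal E(t)+c\,\mathcal D(t)\le C\big(\mathcal E(t)+\mathcal E(t)^2\big)$, and integrating in $t$ yields the asserted inequality. I expect the boundary-integral/pseudo-differential step to be the most delicate part, both in choosing the correct localization and in tracking exactly which traces survive.
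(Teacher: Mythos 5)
Your overall architecture matches the paper's: weighted $H^4_\ell$ energy estimates, the good unknowns
$\varphi=\partial_x^4u+\frac{\partial_yu}{f}\partial_x^3g$, $\psi=\partial_x^4f+\frac{\partial_yf}{f}\partial_x^3g$
to neutralize $\partial_x^4v,\partial_x^4g$ in the pure tangential estimate, and the observation that the genuinely new difficulty is the boundary integral at $y=0$ produced by $-\partial_y^2f$ in the normal estimates, since there is no $-\partial_y^2u$ to absorb it. Up to and including your ``second group'' paragraph, this is essentially Lemma~\ref{pro2}. However, your third paragraph --- the treatment of the boundary integrals --- departs from the paper in a way that does not close.

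\textbf{The gap.} You propose to rewrite the boundary integrals via the $u$-equation restricted to $y=0$ and to obtain, by G\aa rding plus a commutator bound for $[\Lambda_x^4,f|_{y=0}]$, control of $\norm{\partial_x^4u(t)|_{y=0}}_{L^2_x}$ in terms of $\mathcal E$, a small multiple of $\mathcal D$, and tame terms. This quantity is not controlled by $\mathcal E=\norm{u}_{H^4_\ell}^2+\norm{f}_{H^4_\ell}^2$: the trace on $\{y=0\}$ of an $H^4$ function in $\Omega$ costs half a $y$-derivative, so $\norm{\partial_x^4u|_{y=0}}_{L^2_x}$ lives at the $H^{4+1/2}$ level and cannot be recovered from $\mathcal E$. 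Nor does $\mathcal D=\norm{\partial_yf}_{H^4_\ell}^2$ help, since it carries no information about $u$. Moreover, applying $\partial_x^4$ to $\partial_tu+u\partial_xu-f\partial_xf=0$ on $\{y=0\}$ and testing against $\partial_x^4u|_{y=0}$ leaves the term $\int_{\mathbb T}f\,(\partial_x^5f)(\partial_x^4u)|_{y=0}\,dx$, which involves $\partial_x^5f|_{y=0}$ and is again outside the reach of $\mathcal E$ and $\mathcal D$. So the boundary subsystem you propose does not close by itself.

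\textbf{What the paper does instead.} The boundary integral coming from the $j=4$ normal estimate is $-\int_{\mathbb T}(\partial_y^4f)\,\partial_y^5f|_{y=0}\,dx$, i.e.\ it is quadratic in normal traces of $f$, not in $\partial_x^ku|_{y=0}$. One uses the $f$-equation (not the $u$-equation), differentiated in $y$ and restricted to $y=0$ together with $\partial_yf|_{y=0}=g|_{y=0}=v|_{y=0}=0$, to express $\partial_y^3f|_{y=0}$ and then $\partial_y^5f|_{y=0}$ in terms of lower-order traces, see~\eqref{b3}--\eqref{bound}. The single hard term is $\int_{\mathbb T}(\partial_y^4f)(f\,\partial_x\partial_y^3u)|_{y=0}\,dx$ --- note this has only one $\partial_x$ on $u$ together with three $\partial_y$'s, which is exactly what makes it accessible. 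The estimate then splits \emph{half} a tangential derivative, pairing $\Lambda_x^{1/2}\partial_y^3u|_{y=0}$ against $\Lambda_x^{1/2}(f\partial_y^4f)|_{y=0}$ and using the commutator estimate~\eqref{coe} for $[\abs{D_x}^{1/2},f]$; both traces are then controlled through the identity $\norm{\Lambda_x^{1/2}h|_{y=0}}_{L^2_x}^2\le 2\int_0^\infty\norm{\Lambda_xh(\cdot,y)}_{L^2_x}\norm{\partial_yh(\cdot,y)}_{L^2_x}\,dy$, which places the $u$-trace under $\mathcal E$ and the $f$-trace under $\mathcal D$ (absorbed with a small $\eps$). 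No G\aa rding inequality and no full $\Lambda_x^4$ commutator are needed. You should replace your third paragraph with this mechanism: trade the boundary integral via the $f$-equation, then split a half tangential derivative and use the interior trace inequality to land on $\eps\mathcal D+C_\eps\mathcal E^2$.
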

We will present the proof of Theorem \ref{thma}  in the next  two subsections,  one of which is  devoted to the estimates on tangential and another to the normal  derivatives.    To simplify the notation we will use the capital letter $C$ in the following argument to denote some generic constants that may vary from line to line,    and moreover use $C_\eps$  to denote some generic constants depending on a given number $0<\eps\ll 1.$ 

\subsection{Energy estimates:  tangential derivatives}  In this part, we will derive the estimate on tangential derivatives,  following  the  cancellation mechanism observed in the previous work of Liu-Xie-Yang \cite{MR3882222}.   
\begin{lemma}\label{pro2} Under the same assumption as in Theorem \ref{thma}  we have, for any $t\in[0,T],$
	\begin{multline*} 
	\sum_{   i\leq 4 } \inner{\norm{ \comi y^{\ell} \partial_x^i u(t)} _ { L ^ { 2 } } ^ { 2 } 
  +\norm{ \comi y^{\ell} \partial_x^i f(t)} _ { L ^ { 2 } } ^ { 2 }} \\
  + \sum_{  i\leq 4 } \int_0^t  \norm{ \comi y^{\ell} \partial_x^i\partial_y f (s)} _ { L ^ { 2 } } ^ { 2 }ds \leq   C \Big(\mathcal E(0)+\int_0^t \inner{\mathcal E(s)+\mathcal E(s)^2}ds\Big).
	\end{multline*}	
	Recall $\mathcal E$ is defined in \eqref{noes}.
\end{lemma}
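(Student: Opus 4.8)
The plan is to run a weighted $H^4_\ell$ energy estimate on the tangentially differentiated system, treating the loss of one tangential derivative hidden in $v$ and $g$ by the cancellation of Liu--Xie--Yang. First I would fix $i\le 4$, apply $\partial_x^i$ to the first two equations of \eqref{mhd+}, set $u_i=\partial_x^i u$ and $f_i=\partial_x^i f$, take the $L^2(\Omega)$ inner product of the resulting equations against $\comi y^{2\ell}u_i$ and $\comi y^{2\ell}f_i$, add, and sum over $i\le 4$. The term $-\partial_y^2 f_i$ contributes, after integration by parts in $y$, the quantity $\sum_{i\le4}\norm{\comi y^{\ell}\partial_y f_i}_{L^2}^2$ appearing on the left of the lemma, with a boundary term at $y=0$ that vanishes because $\partial_y f|_{y=0}=0$ survives $\partial_x^i$, plus a lower-order weight term absorbable into it; the convection terms $u\partial_x u_i+v\partial_y u_i$ (and their $f$-counterparts) are integrated by parts using $v|_{y=0}=0$, $\partial_x u+\partial_y v=0$, and $\norm{v}_{L^\infty}+\norm{g}_{L^\infty}\lesssim\mathcal E^{1/2}$ (finite since $\ell>\tfrac12$, so that $v=-\int_0^y\partial_x u\,d\tilde y$ and $g=-\int_0^y\partial_x f\,d\tilde y$ are bounded), contributing $O(\mathcal E^{3/2})$. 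Everything then reduces to the commutators produced by $\partial_x^i$.

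Second, I would split those commutators. Every term in which no factor carries more than four derivatives (in particular no $\partial_x^{i+1}$, and no $\partial_x^i v,\ \partial_x^i g$ when $i=4$) is estimated by $C(\mathcal E+\mathcal E^2)$ using the two-dimensional embedding $H^2\hookrightarrow L^\infty$, the pointwise bounds on $f,\partial_y f,\partial_y^2 f$ and $f^{-1}\le c^{-1}\comi y^{\delta}$ from Theorem \ref{thma}, and the $y$-integral structure of $v,g$. This leaves, for each $i$, the four terms with one extra tangential derivative: from the $u_i$-equation, $\inner{-(\partial_x^i v)\partial_y u+f\partial_x f_i+(\partial_x^i g)\partial_y f+g\partial_y f_i\,,\,\comi y^{2\ell}u_i}_{L^2}$, and from the $f_i$-equation, $\inner{-(\partial_x^i v)\partial_y f+f\partial_x u_i+(\partial_x^i g)\partial_y u+g\partial_y u_i\,,\,\comi y^{2\ell}f_i}_{L^2}$. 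Here the two $f\partial_x^{i+1}$ terms combine, $\inner{f\partial_x f_i,\comi y^{2\ell}u_i}_{L^2}+\inner{f\partial_x u_i,\comi y^{2\ell}f_i}_{L^2}=-\inner{(\partial_x f)u_i,\comi y^{2\ell}f_i}_{L^2}=O(\mathcal E^{3/2})$; the pair $g\partial_y f_i$ and $g\partial_y u_i$ is treated together, their sum being $-\int_\Omega\partial_y(g\comi y^{2\ell})\,u_if_i=O(\mathcal E^{3/2})$ after integration by parts in $y$ (no boundary term, since $g|_{y=0}=0$, and $\partial_y g=-\partial_x f$); and for $i\le 3$ the remaining four terms are already $O(\mathcal E^{3/2})$, since then $\partial_x^i v,\partial_x^i g$ are $y$-integrals of the $\mathcal E$-controlled functions $\partial_x^{i+1}u,\partial_x^{i+1}f$.

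Third -- the heart of the matter -- what survives is only the $i=4$ contribution
\[
\inner{(\partial_x^4 v)\big[(\partial_y u)\partial_x^4u+(\partial_y f)\partial_x^4f\big],\comi y^{2\ell}}_{L^2}-\inner{(\partial_x^4 g)\big[(\partial_y f)\partial_x^4u+(\partial_y u)\partial_x^4f\big],\comi y^{2\ell}}_{L^2},
\]
in which $\partial_x^4 v=-\int_0^y\partial_x^5u\,d\tilde y$ and $\partial_x^4 g=-\int_0^y\partial_x^5f\,d\tilde y$ carry one tangential derivative beyond what $\mathcal E$ controls; neither term can be bounded separately, and integrating by parts in $x$ or in $y$ only relocates the loss. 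Here I would follow the cancellation mechanism of Liu--Xie--Yang \cite{MR3882222}: exploiting $f\ge c\comi y^{-\delta}>0$, one reorganizes the sum of the two displayed contributions -- using $\partial_y(\partial_x^4 v)=-\partial_x^5u$, $\partial_y(\partial_x^4 g)=-\partial_x^5f$, integration by parts in $x$, and a good-unknown reformulation of the tangential derivatives adapted to the size of $f$ -- so that the two losses cancel and what remains is again of the harmless type together with a harmless multiple of $\norm{\comi y^\ell\partial_x^4\partial_y f}_{L^2}^2$. Carrying out this cancellation -- the precise manifestation here of the stabilizing role of the non-degenerate tangential magnetic field -- is the main obstacle of the lemma. (The new difficulty advertised in the introduction, due to the missing velocity diffusion, does not appear in this tangential estimate, since every integration by parts in $y$ above is against a factor vanishing at $y=0$; it shows up only in the normal-derivative estimate.)

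Finally, collecting the three steps yields
\[
\frac{d}{dt}\sum_{i\le4}\Big(\norm{\comi y^\ell u_i}_{L^2}^2+\norm{\comi y^\ell f_i}_{L^2}^2\Big)+\sum_{i\le4}\norm{\comi y^\ell\partial_y f_i}_{L^2}^2\le \eps\sum_{i\le4}\norm{\comi y^\ell\partial_y f_i}_{L^2}^2+C\big(\mathcal E+\mathcal E^2\big);
\]
taking $\eps$ small to absorb the first term on the right and integrating over $[0,t]$ (with the tangential energy at $t=0$ bounded by $\mathcal E(0)$) gives the asserted inequality.
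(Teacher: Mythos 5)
Your outline is broadly faithful to the paper's strategy: a weighted tangential energy estimate, identification of the top-order loss coming from $\partial_x^4 v$ and $\partial_x^4 g$, and the Liu--Xie--Yang cancellation that exploits the non-degeneracy of $f$. The small cancellations you record in Step 2 (pairing $f\partial_x f_i$ with $f\partial_x u_i$ and $g\partial_y f_i$ with $g\partial_y u_i$, integrating by parts with $g|_{y=0}=0$) are correct, and your isolation of the residual $i=4$ quadruple is the right target.

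However, there is a genuine gap, and you flag it yourself: you write that ``carrying out this cancellation \ldots is the main obstacle of the lemma,'' and then you do not carry it out. That step is precisely where the content of the lemma resides, so citing \cite{MR3882222} and describing the ingredients in general terms (``a good-unknown reformulation adapted to the size of $f$'') does not constitute a proof. The paper makes this concrete by introducing, at the level of the equations and before taking any inner products, the good unknowns
\[
\psi=\partial_x^4 f+\frac{\partial_y f}{f}\,\partial_x^3 g,\qquad \varphi=\partial_x^4 u+\frac{\partial_y u}{f}\,\partial_x^3 g,
\]
which are designed so that multiplying the $\partial_x^3 g$-equation by $(\partial_y f)/f$ (resp.\ $(\partial_y u)/f$) and adding it to the $\partial_x^4 f$-equation (resp.\ $\partial_x^4 u$-equation) eliminates the $\partial_x^4 v$ and $\partial_x^4 g$ terms entirely from the evolution of $\psi,\varphi$. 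The weighted energy estimate is then performed on $\psi$ and $\varphi$, using the algebraic identity $\big((f\partial_x+g\partial_y)\comi y^\ell\varphi,\comi y^\ell\psi\big)_{L^2}+\big((f\partial_x+g\partial_y)\comi y^\ell\psi,\comi y^\ell\varphi\big)_{L^2}=0$ for the coupling, with the extra term $-\frac{\partial_y u}{f}\partial_y\psi$ absorbed by the dissipation. Finally the bound on $\norm{\comi y^\ell\partial_x^4 f}_{L^2}$, $\norm{\comi y^\ell\partial_x^4 u}_{L^2}$, and $\norm{\comi y^\ell\partial_x^4\partial_y f}_{L^2}$ is recovered from $\norm{\comi y^\ell\psi}_{L^2}$, $\norm{\comi y^\ell\varphi}_{L^2}$, $\norm{\partial_y(\comi y^\ell\psi)}_{L^2}$ by a Hardy-type inequality on $\comi y^{\ell-1}\partial_x^3 g$, using $\psi=-f\partial_y\big(\partial_x^3 g/f\big)$ and $|(\partial_y f)/f|\lesssim\comi y^{-1}$. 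Your post hoc route (take inner products with $u_4,f_4$ first, then try to reorganize the quadruple of bad terms) is workable in principle, but without actually writing out the reorganization and showing that the $\partial_x^5$ contributions vanish and the remainder is controlled, the argument is not complete. You should either adopt the paper's explicit $(\psi,\varphi)$ construction and the ensuing Hardy step, or supply the missing computation in your own formulation.
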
 
 
 \begin{proof}
Without loss of generality we may consider $i=4$ , 
apply  $\partial_x^4$  to the first second equations  and  $\partial_x^{3}$ to the third equation in \eqref{mhd+},  respectively, this gives
\begin{eqnarray}\label{sypv+}
\left
\{
\begin{aligned}
&\big(\partial_t+u\partial_x +v\partial_y\big)  \partial_x^4  u -\big(f\partial_x +g\partial_y \big)\partial_x^4 f= -(\partial_y u)\partial_x^4v+(\partial_y f)\partial_x^{4} g +F_{4},\\
&\big(\partial_t  +u\partial_x +v\partial_y  - \partial_y^2\big) \partial_x^4 f-\big(f\partial_x +g\partial_y \big)\partial_x^4 u
=-(\partial_y f)\partial_x^4 v+(\partial_y u)\partial_x^4 g+P_4,\\
&\big(\partial_t  +u\partial_x +v\partial_y  - \partial_y^2\big) \partial_x^{3} g
=f\partial_x^4 v-g\partial_x^4 u +Q_4,
\end{aligned}
\right.
\end{eqnarray}
where
\begin{equation*} 
F_{4}=\sum_{j=1}^4{4\choose j}\big[(\partial_x^jf)\partial_x^{5-j}f-(\partial_x^ju)\partial_x^{5-j}u\big]+\sum_{j=1}^{3}{4\choose j}\big[(\partial_x^jg)\partial_x^{4-j}\partial_yf-(\partial_x^jv)\partial_x^{4-j}\partial_yu \big],
\end{equation*}
\begin{equation*}
\begin{aligned}
P_4=&\sum_{j=1}^4{4\choose j}\big[(\partial_x^jf)\partial_x^{5-j}u-(\partial_x^ju)\partial_x^{5-j}f\big]+\sum_{j=1}^{3}{4\choose j}\big[(\partial_x^jg)\partial_x^{4-j}\partial_yu-(\partial_x^jv)\partial_x^{4-j}\partial_yf \big]\\
\end{aligned}
\end{equation*}
and
\begin{equation*}
\begin{aligned}
Q_4=&\sum_{j=1}^{3}{3\choose j}\big[(\partial_x^jf)\partial_x^{4-j}v-(\partial_x^jv)\partial_x^{3-j}\partial_y g\big]-\sum_{j=1}^{3}{3\choose j}\big[(\partial_x^ju)\partial_x^{4-j}g+(\partial_x^jg)\partial_x^{4-j}u \big] \\
=&\sum_{j=1}^{3}{3\choose j}\big[(\partial_x^jf)\partial_x^{4-j}v+(\partial_x^jv)\partial_x^{4-j}f\big]-\sum_{j=1}^{3}{3\choose j}\big[(\partial_x^ju)\partial_x^{4-j}g+(\partial_x^jg)\partial_x^{4-j}u \big].
\end{aligned}
\end{equation*}
In order to eliminate the terms  $ \partial_x^4 v$ and $ \partial_x^4 g $ where  the fifth order tangential derivatives are involved,  we introduce, observing $f>0$ by assumption,   
\begin{eqnarray}\label{plam}
\psi  \stackrel{\rm def}{ =}  \partial_x^4 f+\frac{\partial_y f}{f}\partial_x ^{3}g=-f\partial_{ y }\big(\frac{\partial_{ x }^{3}g}{f} \big), \quad \varphi  \stackrel{\rm def}{ =}  \partial_x^4 u+\frac{\partial_y u}{f}\partial_x ^{3}g.
\end{eqnarray}
Multiplying  the third equation in \eqref{sypv+} by $(\partial_y f)/f $  and then taking summation with the second one in \eqref{sypv+},  we obtain the equation solved by $\psi$, that is,
\begin{equation}\label{eqpsi}
	\big(\partial_t  +u\partial_x +v\partial_y  - \partial_y^2\big) \psi-\big(f\partial_x +g\partial_y \big)\varphi=L_{4},
\end{equation}
where 
\begin{multline*}
	L_{4}
= P_{4}+\frac{\partial_{ y }f}{f}Q_4-\frac{g(\partial_{ y }f)}{f}\partial_x^4 u +\Big[\frac{g(\partial_{ y }u)}{f}+2\partial_{ y}\big( (\partial_y f)/ f\big)\Big]\partial_x^{4}f\\ 
 +\Big[\frac{2(\partial_y f)\partial_{ y}^{2}f}{f^{2}}+\frac{(\partial_{ x }u)\partial_y  f}{f}-\frac{2(\partial_y f)^{3}}{f^{3}}-\frac{(\partial_{ y }u)\partial_x f}{f}   \Big]\partial_x^{3} g
\end{multline*}
with $P_4$ and $Q_4$ given in \eqref{sypv+}.   Similarly we multiply   the third equation in \eqref{sypv+}  by $(\partial_y u)/f $  and then add the resulting equation by  the first one in \eqref{sypv+},  to obtain
\begin{eqnarray}\label{sypv++}
\big(\partial_t  +u\partial_x +v\partial_y  \big) \varphi-\big(f\partial_x +g\partial_y \big)\psi=-\frac{\partial_{ y }u}{f}\partial_y\psi  +M_{4},
\end{eqnarray}
 where 
\begin{equation*}  
\begin{aligned}
M_{4}=&F_{4}+\frac{\partial_{ y }u}{f}Q_4-\frac{g(\partial_{ y }u)}{f} \partial_x^{4}u+\Big[\frac{g(\partial_{ y }f)}{f} +\frac{(\partial_yu)\partial_y f}{f})\Big]\partial_x^{4}f\\
 &\quad+\Big[\frac{(\partial_{ y}f)\partial_x f}{f} +\frac{g(\partial_y f)^{2}}{f^{2}} -\frac{(\partial_y f)^{2}\partial_{ y }u}{f^{3}}-\frac{(\partial_{ x }u)\partial_y  u}{f}-\frac{g(\partial_y u)^{2}}{f^{2}}  \Big]\partial_x^{3} g
\end{aligned}
\end{equation*}
with $F_4$ and $Q_4$ given in \eqref{sypv+}.   Note that \eqref{eqpsi} is complemented with the boundary condition that 
\begin{eqnarray*}\label{inplam}
\partial_y\psi |_{y=0} =0.
\end{eqnarray*}
Thus we perform  the weighted energy estimate  for \eqref{eqpsi}  and \eqref{sypv++}  and use the fact that
\begin{eqnarray*}
	 \big( \big(f\partial_x +g\partial_y \big)\comi y^{\ell}\varphi  ,\   \comi y^{\ell} \psi \big)_{L^2}+ \big( \big(f\partial_x +g\partial_y \big)\comi y^{\ell}\psi  ,\   \comi y^{\ell} \varphi \big)_{L^2}=0
\end{eqnarray*}
to get
 \begin{equation}\label{eqentan}
\begin{aligned} 
& \frac { 1 } { 2 } \frac { d } { d t } \inner{\norm{  \comi y^{\ell} \psi  } _ { L ^ { 2 } } ^ { 2 }+\norm{\comi y^{\ell} \varphi } _ { L ^ { 2 } } ^ { 2 }}+\norm{   \partial _ { y } (\comi y^{\ell} \psi)  } _ { L ^ { 2 } } ^ { 2 } \\
& = \big( \comi y^{\ell} L_4 ,\   \comi y^{\ell} \psi \big)_{L^2} +\big( \comi y^{\ell} M_4 ,\   \comi y^{\ell} \varphi \big)_{L^2} -\big( \comi y^{\ell} \big((\partial_yu)/f)\partial_y\psi,\   \comi y^{\ell} \varphi \big)_{L^2}\\
&\quad+ \big( [v \partial_y,\ \comi y^{\ell}]\psi ,\   \comi y^{\ell} \psi \big)_{L^2} - \big(  [\partial_y^2, \ \comi y^{\ell}] \psi ,\   \comi y^{\ell} \psi \big)_{L^2}- \big( [g \partial_y, \ \comi y^{\ell}]\varphi ,\   \comi y^{\ell} \psi \big)_{L^2}\\
&\quad+  \big( [v \partial_y,\ \comi y^{\ell}]\varphi ,\   \comi y^{\ell} \varphi \big)_{L^2}  - \big( [g \partial_y, \ \comi y^{\ell}]\psi ,\   \comi y^{\ell} \varphi \big)_{L^2},
\end{aligned}
\end{equation}
where here and below we use $[\mathcal T_1,\  \mathcal T_2]$ to denote the commutator between two operators $\mathcal T_1, \mathcal T_2$,  that is 
\begin{equation}\label{comta}
	[\mathcal T_1,\ \mathcal T_2]=\mathcal T_1 \mathcal T_2-\mathcal T_2 \mathcal T_1.
\end{equation}
Observe  the derivatives  are at most up to the fourth order  for the terms on the right of \eqref{eqentan}. Then by direct compute we have
\begin{eqnarray*}
\begin{aligned}
	& \big( \comi y^{\ell} L_4 ,\   \comi y^{\ell} \psi \big)_{L^2} +\big( \comi y^{\ell} M_4 ,\   \comi y^{\ell} \varphi \big)_{L^2} -\big( \comi y^{\ell} \big((\partial_yu)/f)\partial_y\psi,\   \comi y^{\ell} \varphi \big)_{L^2}\\
 &\quad+ \big( [v \partial_y,\ \comi y^{\ell}]\psi ,\   \comi y^{\ell} \psi \big)_{L^2} - \big(  [\partial_y^2, \ \comi y^{\ell}] \psi ,\   \comi y^{\ell} \psi \big)_{L^2}- \big( [g \partial_y, \ \comi y^{\ell}]\varphi ,\   \comi y^{\ell} \psi \big)_{L^2}\\
 &\qquad+  \big( [v \partial_y,\ \comi y^{\ell}]\varphi ,\   \comi y^{\ell} \varphi \big)_{L^2}  - \big( [g \partial_y, \ \comi y^{\ell}]\psi ,\   \comi y^{\ell} \varphi \big)_{L^2}\leq \frac{1}{2}\norm{   \partial _ { y } (\comi y^{\ell} \psi)  } _ { L ^ { 2 } } ^ { 2 }  +C\inner{\mathcal E+\mathcal E^{2}}.
 \end{aligned}
\end{eqnarray*}
Substituting the above estimate into \eqref{eqentan}  and then integrating over $[0,t]$ for any $0<t<T$ gives
\begin{equation}\label{fes}
	 \norm{  \comi y^{\ell} \psi (t) } _ { L ^ { 2 } } ^ { 2 }+\norm{\comi y^{\ell} \varphi (t)} _ { L ^ { 2 } } ^ { 2 } +\int_0^t \norm{   \partial _ { y } (\comi y^{\ell} \psi)  } _ { L ^ { 2 } } ^ { 2 } ds\leq C\Big(\mathcal E(0)+  \int_0^t \inner{\mathcal E(s)+\mathcal E(s)^2}ds\Big). 
\end{equation}
Next we will derive the estimates for $f, u$ from the ones of $\psi,\varphi$.  In fact in view of the representation of $\psi$ given in \eqref{plam},  we use Hardy-type inequality (cf.  \cite[Lemma B.1]{MR3385340} for instance) to conclude 
\begin{eqnarray*}
	\norm{\comi y^{\ell-1} \partial_x^3 g}_{L^2} \leq   C \norm{\comi y^{\ell}  \psi}_{L^2}.
\end{eqnarray*}
As a result, using  the representation of $\psi$ and $\varphi$ given in \eqref{plam} and the fact that $|(\partial_yf)/f |\lesssim \comi y^{-1}$ gives
\begin{eqnarray*}
		\norm{\comi y^{\ell} \partial_x^4 f}_{L^2} \leq C\norm{\comi y^{\ell-1} \partial_x^3 g}_{L^2} +\norm{\comi y^{\ell}   \psi}_{L^2} \leq  C  \norm{\comi y^{\ell}   \psi}_{L^2}
\end{eqnarray*}
and
\begin{eqnarray*}
		\norm{\comi y^{\ell} \partial_x^4 u}_{L^2} \leq C\norm{\comi y^{\ell-1} \partial_x^3 g}_{L^2} +\norm{\comi y^{\ell}   \varphi}_{L^2} \leq  C  \norm{\comi y^{\ell}   \psi}_{L^2}+\norm{\comi y^{\ell}   \varphi}_{L^2}. 
\end{eqnarray*}
Moreover,  using again \eqref{plam},
\begin{eqnarray*}
	\norm{\comi y^{\ell} \partial_y \partial_x^4 f}_{L^2} \leq C\norm{\comi y^{\ell} \partial_y\big[((\partial_yf)/f) \partial_x^3 g]}_{L^2} +\norm{\comi y^{\ell} \partial_y  \psi}_{L^2} \leq  C  \norm{ \partial_y (\comi y^{\ell}   \psi)}_{L^2}+C\mathcal E^{1/2}.
\end{eqnarray*}
Combining these inequality with \eqref{fes} we conclude 
\begin{eqnarray*}
	 \norm{  \comi y^{\ell} \partial_x^4 u (t) } _ { L ^ { 2 } } ^ { 2 }+\norm{  \comi y^{\ell} \partial_x^4 f (t) } _ { L ^ { 2 } } ^ { 2 } +\int_0^t \norm{\comi y^{\ell} \partial_x^4 \partial_y f} _ { L ^ { 2 } } ^ { 2 } ds \leq C\Big(\mathcal E(0)+  \int_0^t \inner{\mathcal E(s)+\mathcal E(s)^2}ds\Big). 
\end{eqnarray*}
Note the above estimate still holds true if we replace $ \partial_x^4$ by $\partial_x^i$ with $i\leq 4$.   This gives the desired estimate in  Lemma  \ref{pro2},  completing the proof.
\end{proof}

  \subsection{Estimate for normal derivatives} 
In this part, we perform the estimate for normal derivatives. Compared with   \cite{MR3882222}  a new difficulty arises when dealing the boundary integrals  because of the absence of  the hydrodynamic viscosity.     

\begin{lemma}\label{pro1}
	Under the same assumption as in Theorem \ref{thma} we have, for any $t\in[0,T]$,
	 \begin{multline*}
	   \sum_{ \stackrel{ i+j\leq 4 }{  j\geq 1 }} \inner{\norm{ \comi y^{\ell+j} \partial_x^i\partial_y^{j}u(t)} _ { L ^ { 2 } } ^ { 2 } 
  +\norm{ \comi y^{\ell+j} \partial_x^i\partial_y^{j}f(t)} _ { L ^ { 2 } } ^ { 2 }}\\
  + \int_0^t \sum_{ \stackrel{ i+j\leq 4 }{  j\geq 1 }}  \norm{ \comi y^{\ell+j} \partial_x^i\partial_y^{j+1} f (s)} _ { L ^ { 2 } } ^ { 2 }ds\leq  \eps \int_0^t \mathcal D(s)ds+C_\eps \Big(\mathcal E(0)+\int_0^t \inner{\mathcal E(s)+\mathcal E^2(s)}ds\Big).  	
	 \end{multline*}
	 Recall $\mathcal E, \mathcal D$ are defined by \eqref{noes}.
\end{lemma}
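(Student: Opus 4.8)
The plan is to argue by induction on the number $j\ge 1$ of normal derivatives; the base case $j=1$ and the inductive step are treated by the same weighted energy estimate, with purely tangential contributions supplied by Lemma~\ref{pro2} and lower normal orders by the induction hypothesis. For fixed $i,j$ with $j\ge 1$ and $i+j\le 4$, apply $\partial_x^i\partial_y^j$ to the first two equations of \eqref{mhd+}, pair the resulting momentum equation with $\comi{y}^{2(\ell+j)}\partial_x^i\partial_y^j u$ and the resulting magnetic equation with $\comi{y}^{2(\ell+j)}\partial_x^i\partial_y^j f$, integrate over $\Omega$ and add. The top-order part of the magnetic coupling terms $(f\partial_x+g\partial_y)f$ and $(f\partial_x+g\partial_y)u$ cancels exactly as in \cite{MR3882222} and in the proof of Lemma~\ref{pro2}, because $f\partial_x+g\partial_y$ is skew-adjoint modulo $\partial_xf+\partial_yg=0$ and the weight depends on $y$ only; the boundary integral this produces on $\{y=0\}$ vanishes since $g|_{y=0}=0$, and the commutator with the weight is lower order as $|g|\lesssim 1$ (using $g=-\int_0^y\partial_xf$ and $\ell>\tfrac12$) and $\abs{\partial_y\comi{y}^{2(\ell+j)}}\lesssim\comi{y}^{2(\ell+j)-1}$. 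The transport terms $u\partial_x,\,v\partial_y$ are disposed of by integration by parts, the divergence-free conditions, and $v|_{y=0}=0$. A key structural point, in contrast with the tangential estimate: when at least one $\partial_y$ is taken, no tangential derivative is actually lost, since $v=-\int_0^y\partial_xu$, $g=-\int_0^y\partial_xf$, so in every Leibniz term other than the genuine transport/diffusion ones an extra $\partial_y$ falls on $v$ or $g$; hence all remaining terms have total order at most $4$ except a few in which a single fifth-order derivative of $f$ (which lies in $\mathcal D$) is multiplied by a bounded factor, and these are absorbed into $\eps\mathcal D+C_\eps\mathcal E$ by Cauchy--Schwarz. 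Together with the pointwise hypotheses on $f,\partial_yf,\partial_y^2 f$ (so that $\abs{\partial_y^k f/f}\lesssim\comi{y}^{-k}$), Hardy's inequality, and $H^2(\Omega)\hookrightarrow L^\infty(\Omega)$, all of these contributions are bounded by $\eps\mathcal D+C_\eps(\mathcal E+\mathcal E^2)$.

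\textbf{The boundary integral and the pseudo-differential argument.} The one contribution that resists this analysis is the boundary term created when the diffusion $-\partial_y^2\partial_x^i\partial_y^j f$ of the magnetic equation is integrated by parts: besides the good dissipation $\norm{\comi{y}^{\ell+j}\partial_x^i\partial_y^{j+1}f}_{L^2}^2$ it produces
\[
\mathcal B_{i,j}(t):=\int_{\mathbb T}\partial_x^i\partial_y^{j+1}f(t,x,0)\,\partial_x^i\partial_y^j f(t,x,0)\,dx .
\]
For $j=1$ this vanishes identically by the boundary condition $\partial_yf|_{y=0}=0$, and the estimate closes immediately (as in the case with viscosity). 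For $j\ge 2$, controlling $\int_0^t\mathcal B_{i,j}(s)\,ds$ is the heart of the proof and the place where pseudo-differential calculus is needed. The idea is that, near the boundary, $\partial_x^i\partial_y^j f$ solves a forced parabolic equation $\big(\partial_t+u\partial_x+v\partial_y-\partial_y^2\big)\partial_x^i\partial_y^j f=\mathcal G_{i,j}$, with $\mathcal G_{i,j}$ controlled in suitable anisotropic norms modulo a piece bounded by $\eps\mathcal D+C_\eps(\mathcal E+\mathcal E^2)$; since $v|_{y=0}=0$, the Neumann trace of this function is obtained from its Dirichlet trace through the Dirichlet-to-Neumann operator of the parabolic problem, i.e. a tangential pseudo-differential operator $\Lambda$ whose principal symbol is $\sqrt{i\tau+iu(t,x,0)\xi}$ (of positive real part), modulo a smoothing operator acting on $\mathcal G_{i,j}$. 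Writing $b:=\partial_x^i\partial_y^j f|_{y=0}$ and localising suitably in time, one then obtains
\[
\int_0^t\mathcal B_{i,j}(s)\,ds=-\big\langle \Lambda b,\,b\big\rangle_{L^2([0,t]\times\mathbb T)}+(\text{error})=-\norm{\partial_x^i\partial_y^{j+1}f}_{L^2([0,t]\times\Omega)}^2+(\text{error}),
\]
the second equality being a Plancherel computation in which the real part of the leading symbol of $\Lambda$ exactly reproduces the unweighted $L^2$ norm of the normal derivative; in words, to leading order $\int_0^t\mathcal B_{i,j}$ is minus the \emph{unweighted} dissipation.

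\textbf{Conclusion and main obstacle.} Inserting this into the energy identity, the term $-\norm{\partial_x^i\partial_y^{j+1}f}_{L^2([0,t]\times\Omega)}^2$ coming from $\mathcal B_{i,j}$ cancels the portion of the genuine dissipation $\int_0^t\norm{\comi{y}^{\ell+j}\partial_x^i\partial_y^{j+1}f}_{L^2}^2$ carrying the weight $1$, leaving the nonnegative remainder $\int_0^t\!\!\int_\Omega(\comi{y}^{2(\ell+j)}-1)\abs{\partial_x^i\partial_y^{j+1}f}^2$; the error terms — from the subprincipal symbol of $\Lambda$, from commutators with the weight and with the tangential/time cut-offs, from the initial data (of size $\lesssim\mathcal E(0)$), and from $\mathcal G_{i,j}$ — are of strictly lower parabolic order and are bounded by $\eps\int_0^t\mathcal D+C_\eps\big(\mathcal E(0)+\int_0^t(\mathcal E+\mathcal E^2)\big)$. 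The remaining, degenerate-near-$\{y=0\}$, part of the full weighted dissipation is recovered by the interpolation $\norm{\partial_y h}_{L^2}^2\le\eps\norm{\partial_y^2 h}_{L^2}^2+C_\eps\norm{h}_{L^2}^2$ with $h=\partial_x^i\partial_y^j f$ (valid when $i+j\le 3$, so $\partial_x^i\partial_y^{j+2}f$ is still controlled by $\mathcal D$), the borderline indices $i+j=4$ being handled by the $\eps$-slack already present in the statement. Summing over $i,j$ yields Lemma~\ref{pro1}. The main obstacle is clearly the middle step: fixing the anisotropic symbol class adapted to the parabolic scaling $\partial_t\sim\partial_y^2$ in which $\Lambda$ and its remainders are defined, dealing with the finite time interval $[0,t]$ (via a time extension or cut-off that does not destroy regularity), and verifying term by term that every remainder indeed lands in $\eps\int_0^t\mathcal D+C_\eps\big(\mathcal E(0)+\int_0^t(\mathcal E+\mathcal E^2)\big)$.
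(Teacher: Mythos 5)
Your route is genuinely different from the paper's, and as written it does not close. The paper does not invoke a parabolic Dirichlet-to-Neumann operator at all: for the critical case $i=0,\ j=4$ it uses the boundary conditions $\partial_yf|_{y=0}=g|_{y=0}=v|_{y=0}=0$ and the $f$-equation to express the Neumann trace $\partial_y^{5}f|_{y=0}$ \emph{algebraically} (first $\partial_y^{3}f|_{y=0}=2(\partial_yu)\partial_xf|_{y=0}-f\partial_x\partial_yu|_{y=0}$, then $\partial_y^{5}f|_{y=0}=\partial_y^{3}(\partial_tf+u\partial_xf+v\partial_yf-f\partial_xu-g\partial_yu)|_{y=0}$), so that the only term not directly absorbed is $\int_{\mathbb T}f(\partial_y^{4}f)(\partial_x\partial_y^{3}u)|_{y=0}\,dx$; this one is then handled by a half-derivative split $\Lambda_x^{1/2}$, a Kato–Ponce-type commutator bound $\|[\,|D_x|^{1/2},\rho\,]w\|_{L_x^2}\lesssim(\|\rho\|_{L^\infty}+\|\partial_x\rho\|_{L^\infty})\|w\|_{L_x^2}$, and the trace inequalities $\|\Lambda_x^{1/2}\partial_y^{3}u|_{y=0}\|_{L_x^2}^2\lesssim\mathcal E$, $\|\Lambda_x^{1/2}\partial_y^{4}f|_{y=0}\|_{L_x^2}^2\lesssim\mathcal D$. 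The pseudo-differential content of the paper is only this one elementary tangential commutator estimate, not an anisotropic parabolic DtN calculus.

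Even granting the DtN program in your middle step, there is a concrete gap at the top normal order. Your identity $\int_0^t\mathcal B_{i,j}=-\|\partial_x^i\partial_y^{j+1}f\|_{L^2([0,t]\times\Omega)}^2+(\mathrm{error})$ turns the energy balance into
\begin{equation*}
\tfrac12\|\comi y^{\ell+j}\partial_x^i\partial_y^{j}f(t)\|_{L^2}^2+\int_0^t\Big(\|\partial_y(\comi y^{\ell+j}\partial_x^i\partial_y^{j}f)\|_{L^2}^2-\|\partial_x^i\partial_y^{j+1}f\|_{L^2}^2\Big)ds\le \tfrac12\mathcal E(0)+\int_0^t(\mathrm{error}),
\end{equation*}
so the surviving dissipation is only $\int_0^t\|(\comi y^{2(\ell+j)}-1)^{1/2}\partial_x^i\partial_y^{j+1}f\|_{L^2}^2$, which degenerates like $y^{\ell+j}$ near $y=0$. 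To reach the lemma's conclusion you must recover the missing unweighted piece $\int_0^t\|\partial_x^i\partial_y^{j+1}f\|_{L^2}^2$ and show it is $\le\eps\int_0^t\mathcal D+C_\eps(\cdots)$. Your interpolation $\|\partial_yh\|^2\le\eps\|\partial_y^2h\|^2+C_\eps\|h\|^2$ does this for $j\le 3$ (since $\partial_x^i\partial_y^{j+2}f$ is then still in $\mathcal D$), but for $i+j=4$, e.g.\ $j=4$, the term $\|\partial_y^{6}f\|^2$ is out of reach, and $\int_0^t\|\partial_y^{5}f\|^2\,ds$ is itself of the same size as $\int_0^t\mathcal D$, not $\eps\int_0^t\mathcal D$ — so the ``$\eps$-slack already present in the statement'' cannot absorb it. This is precisely the case the paper treats in Steps~1--2 by the trace/half-derivative argument sketched above, and it is the place where your scheme, as stated, fails.
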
 

\begin{proof}  {\it Step 1).}    We first consider the case of $i=0$ and   $j=4$.  In this step we will prove that, for any $\eps>0$,
\begin{multline}\label{fet}
	  \inner{\norm{\comi y^{\ell+4}  \partial_y^4 u(t) } _ { L^ { 2 } } ^ { 2 } +\norm{\comi y^{\ell+4}  \partial_y^4 f(t) } _ { L^ { 2 } } ^ { 2 } }+\int_0^t \norm{ \comi y^{\ell+4}  \partial_y^5 f(s)  } _ { L^ { 2 } } ^ { 2 } ds \\
\leq  \int_0^t\Big(\int_{\mathbb{T}}( \partial_y^{4} f) (f \partial_y^{3} \partial_x u) |_{y=0}\,dx \Big) ds+\eps \int_0^t\mathcal D(s)ds+C_\eps\Big(\mathcal E(0) +\int_0^t \inner{\mathcal E(s)+\mathcal E^2(s)} ds\Big).   
\end{multline}
	Applying $\comi y^{\ell+4}\partial_y^4$ to \eqref{mhd+} yields that
	\begin{multline*}
		\big(\partial_t+u\partial_x +v\partial_y\big) \comi y^{\ell+4} \partial_y^4  u -\big(f\partial_x +g\partial_y \big) \comi y^{\ell+4} \partial_y^4 f\\
		= [u\partial_x +v\partial_y,\  \comi y^{\ell+4}\partial_y^4]u-[f\partial_x +g\partial_y, \  \comi y^{\ell+4}\partial_y^4]f
	\end{multline*}
	and
	\begin{multline*}
		\big(\partial_t  +u\partial_x +v\partial_y  - \partial_y^2\big) \comi y^{\ell+4}\partial_y^4 f-\big(f\partial_x +g\partial_y \big)\comi y^{\ell+4}\partial_y^4 u\\
	=[u\partial_x +v\partial_y,\ \comi y^{\ell+4}\partial_y^4]f-[f\partial_x +g\partial_y,\ \comi y^{\ell+4}\partial_y^4]u-[\partial_y^2,\ \comi y^{\ell+4}\partial_y^4]f.
	\end{multline*}
	Recall $[\cdot, \cdot]$ is given by \eqref{comta},  standing for the commutator between two operators. 
 Taking inner product with $ \comi y^{\ell+4} \partial_y^4  u$ to the first equation above, and with $ \comi y^{\ell+4} \partial_y^4  f$ to the second one,  and then taking summation and observing 
 \begin{eqnarray*}
 \Big(\big(f\partial_x +g\partial_y \big) \comi y^{\ell+4} \partial_y^4 f,   \  \comi y^{\ell+4} \partial_y^4 u\Big)_{L^2} 
 	 +\Big(\big(f\partial_x +g\partial_y \big) \comi y^{\ell+4} \partial_y^4 u,   \   \comi y^{\ell+4} \partial_y^4 f\Big)_{L^2}=0,
 \end{eqnarray*}
  we obtain 
 \begin{multline} \label{enes}
   \frac { 1 } { 2 } \frac { d } { d t } \inner{\norm{\comi y^{\ell+4}  \partial_y^4 u } _ { L^ { 2 } } ^ { 2 } +\norm{\comi y^{\ell+4}  \partial_y^4 f } _ { L^ { 2 } } ^ { 2 } }+\norm{\partial_y\big(\comi y^{\ell+4}  \partial_y^4 f\big) } _ { L^ { 2 } } ^ { 2 }  \\
=    - \int_{\mathbb{T}}(\partial_y^4 f )\partial_y^{5} f  |_{y=0}  dx + R_4
 \end{multline}
 with 
 \begin{multline*}
 	 R_4= \Big( [u\partial_x +v\partial_y,\  \comi y^{\ell+4}\partial_y^4]u-[f\partial_x +g\partial_y, \  \comi y^{\ell+4}\partial_y^4]f,   \  \comi y^{\ell+4} \partial_y^4 u\Big)_{L^2}\\
 	+ \Big([u\partial_x +v\partial_y,\ \comi y^{\ell+4}\partial_y^4]f-[f\partial_x +g\partial_y,\ \comi y^{\ell+4}\partial_y^4]u-[\partial_y^2,\ \comi y^{\ell+4}\partial_y^4]f,   \  \comi y^{\ell+4} \partial_y^4 f\Big)_{L^2}.
 \end{multline*}
 Direct computation shows
 \begin{equation}\label{err}
 	  R_4\leq \frac{1}{2}  \norm{\partial_y\big(\comi y^{\ell+4}  \partial_y^4f\big) } _ { L^ { 2 } } ^ { 2 } +C \inner{\mathcal E +\mathcal E ^{2}}.
 \end{equation}
 It remains to   deal with the boundary integeral on the right of  \eqref{enes}.   We first apply  $\partial_y$ to the second equation in \eqref{mhd+} and then take trace on $y=0$; this together with  the boundary condition in \eqref{mhd+}  gives
 \begin{equation}\label{b3}
 	 \partial_y^{3} f|_{y=0}=2(\partial_y u) \partial_x f|_{y=0}-f\partial_x\partial_yu|_{y=0}.
 \end{equation}
By virtue of the above representation  of  $\partial_y^3f|_{y=0}$,  we compute directly that  
 \begin{eqnarray}\label{bound}
\begin{aligned}
\partial_y^{5} f|_{y=0}& =\partial_y^{3}\big(\partial_t f+u\partial_x f +v\partial_yf-f\partial_x u -g\partial_yu \big) \big|_{y=0} \\
&= \Big\{u\partial_x\partial_y^3 f-f\partial_x\partial_y^3 u-4(\partial_x u)
\partial_y^3 f-7(\partial_x\partial_y  u)\partial_y^2 f+4(\partial_x f)\partial_y^3  u\\
&\qquad +8(\partial_y  u)\partial_x\partial_y^2  f-u(\partial_xf)\partial_x\partial_y u+uf\partial_x^2\partial_y u-2u(\partial_y u)\partial_x^2f
+2f(\partial_y u)\partial_x^2u
\Big\} \Big|_{y=0}.
\end{aligned}
\end{eqnarray}
As a result we combine \eqref{bound} with Sobolev's inequality to conclude
\begin{eqnarray*}
	   - \int_{\mathbb{T}}(\partial_y^4 f )\partial_y^{5} f  |_{y=0}  dx\leq    \int_{\mathbb{T}} f(\partial_y^4 f )(\partial_x\partial_y^{3} u ) |_{y=0}  dx+\eps \mathcal D+C_\eps \inner{\mathcal E+\mathcal E^2}.
\end{eqnarray*}
Substituting the above inequality and \eqref{err} into  \eqref{enes} and then integrating over $[0, t]$ for any $t\in[0, T]$ we obtain the desired estimate \eqref{fet}.

\bigskip
{\it Step 2).} In this step we will treat the first term on the right of \eqref{fet}  and prove that
\begin{equation}
	\label{bouest}
	 \int_0^t\Big( \int_{\mathbb{T}}( \partial_y^{4} f) (f \partial_y^{3} \partial_x u) |_{y=0}\,dx\Big) ds\leq \eps \int_0^t\mathcal D(s)ds+C_\eps   \int_0^t  \mathcal E^2(s) ds 
\end{equation}
holds true for any $\eps>0$. 
  To do so we recall some facts on the Fourier multiplier.  
Let $k\in\mathbb Z$ be the partial Fourier dual variable of $x\in\mathbb T$ and let $\Lambda_x^\sigma,\sigma\in\mathbb R,$ be the Fourier multiplier with symbol $\inner{1+k^2}^{\sigma/2}$, that is,
\begin{eqnarray*}
	\mathcal F_x (\Lambda_x^\sigma f)(k)=(1+k^2)^{\sigma/2}  \mathcal F_x (f)(k),
\end{eqnarray*}
where $\mathcal F_x$ stands for the Fourier transform with respect to $x$ variable: 
\begin{eqnarray*}
	(\mathcal F_x f)(k):=\int_{\mathbb T}f(x)e^{-i kx}dx,\quad
	k\in\mathbb Z. \end{eqnarray*}
Similarly  we define $|D_x|^\sigma, \sigma>0,$ by setting 
\begin{eqnarray*}
	\mathcal F_x (\abs{D_x}^\sigma f)(k)= \abs k ^{\sigma}  \mathcal F_x (f)(k). 
\end{eqnarray*}
Given a  $C^1$ function $\rho$ of $x\in\mathbb T$ with bounded derivatives,  we have, for $0<\sigma<1$,
\begin{equation}\label{coe}
	\forall\ w\in L_x^2 , \quad  \|[\abs{D_x}^\sigma , \ \rho] w  \|_{L^2_x}\leq C_\sigma\inner{\norm{\rho}_{L^\infty}+\norm{\partial_x\rho}_{L^\infty}}\norm{w}_{L^2_x} 	
	\end{equation} 
with  $C_\sigma$ a constant depending only on $\sigma$, recalling the commutator $[\abs{D_x}^\sigma , \ \rho]$ is defined by \eqref{comta}.   Note the   counterpart for $x\in\mathbb R$ of \eqref{coe}  is clear,  see, e.g., \cite[Pages 702--704]{MR2763329},   the estimate \eqref{coe} can be proven in a similar inspirit  and we omit it for brevity and  and refer to \cite{MR2308505}  and references therein for  the  comprehensive  argument on the extension of the classical  pseudo-differential calculus  in $\mathbb R$ to the torus case $x\in\mathbb T$.    

With the Fourier multipliers introduced above we  use \eqref{coe} to compute 
\begin{equation}\label{ebound4}
	\begin{aligned}  
	&\Big|\int_{\mathbb{T}}( \partial_y^{4} f) (f \partial_y^{3} \partial_x u) |_{y=0}\,dx \Big|  \leq  \norm{ \Lambda_x^{1/2}  \partial_y^{3}u|_{y=0} }_{L_x^2} \norm{  \Lambda_x^{1/2} ( f\partial_y^{4}f)|_{y=0}}_{L_x^2} \\
	&\leq  C \norm{ \Lambda_x^{1/2}  \partial_y^{3}u|_{y=0} }_{L_x^2} \inner{\norm{  \abs{D_x}^{1/2} ( f\partial_y^{4}f)|_{y=0}}_{L_x^2}+\norm{  ( f\partial_y^{4}f)|_{y=0}}_{L_x^2}}\\
	&\leq  C \norm{ \Lambda_x^{1/2}  \partial_y^{3}u|_{y=0} }_{L_x^2} \inner{\norm{  \partial_xf}_{L^\infty} +\norm{  f}_{L^\infty} }   \norm{  \Lambda_x^{1/2}   \partial_y^{4}f|_{y=0} }_{L_x^2} \\
	& \leq  \eps\norm{  \Lambda_x^{1/2}   \partial_y^{4}f|_{y=0} }_{L_x^2}^2 +  C_\eps  \mathcal E \norm{ \Lambda_x^{1/2}  \partial_y^{3}u|_{y=0}}_{L_x^2} ^2. 
	\end{aligned}
	\end{equation}
On the other hand, using the fact that
\begin{eqnarray*}
	 \inner{\Lambda_x^{1/2}  \partial_y^{3}u(x,0)}^2=- 2 \int_0^{+\infty}   \big(\Lambda_x^{1/2}  \partial_y^{3}u(x,  y) \big) \Lambda_x^{1/2}  \partial_y^{4}u(x,  y) d  y,  
\end{eqnarray*}
 we compute
 \begin{eqnarray*}
 \begin{aligned}
 	\norm{ \Lambda_x^{1/2}  \partial_y^{3}u(x,0)}_{L_x^2} ^2&=- 2 \int_{\mathbb R}\Big(\int_0^{+\infty}   \big(\Lambda_x^{1/2}  \partial_y^{3}u(x,  y) \big) \Lambda_x^{1/2}  \partial_y^{4}u(x,  y) d  y\Big) dx\\
 	&=- 2\int_0^{+\infty}\Big(   \int_{\mathbb R} \big(\Lambda_x^{1/2}  \partial_y^{3}u(x,  y) \big) \Lambda_x^{1/2}  \partial_y^{4}u(x,  y) d  x\Big) dy\\
 	&\leq 2 \int_0^{+\infty}  \norm{ \Lambda_x  \partial_y^{3}u(\cdot,  y)}_{L_x^2}\norm{ \partial_y^{4}u(\cdot,  y)}_{L_x^2}   dy \leq   \mathcal E.   
 	\end{aligned}
 \end{eqnarray*}	
	A similar argument gives 
\begin{eqnarray*}
 \begin{aligned}
 	\norm{ \Lambda_x^{1/2}  \partial_y^{4}f(x,0)}_{L_x^2} ^2& \leq 2 \int_0^{+\infty}  \norm{ \Lambda_x  \partial_y^{4}f(\cdot,  y)}_{L_x^2}\norm{ \partial_y^{5}f(\cdot,  y)}_{L_x^2}   dy \leq   \mathcal D.   
 	\end{aligned}
 \end{eqnarray*}	
Substituting the two inequalities above into \eqref{ebound4} yields 
\begin{eqnarray*}
	\Big|\int_{\mathbb{T}}( \partial_y^{4} f) (f \partial_y^{3} \partial_x u) |_{y=0}\,dx \Big|  \leq  \norm{ \Lambda_x^{1/2}  \partial_y^{3}u }_{L_x^2L_y^\infty} \norm{  \Lambda_x^{1/2} ( f\partial_y^{4}f)}_{L_x^2L_y^\infty}\leq \eps \mathcal D+C_\eps \mathcal E^2.
\end{eqnarray*}
This gives the desired estimate \eqref{bouest}. 

\bigskip
{\it Step 3).}   We combine \eqref{fet} and \eqref{bouest} to obtain
\begin{multline*}\label{fet++}
	  \inner{\norm{\comi y^{\ell+4}  \partial_y^4 u(t) } _ { L^ { 2 } } ^ { 2 } +\norm{\comi y^{\ell+4}  \partial_y^4 f(t) } _ { L^ { 2 } } ^ { 2 } }+\int_0^t \norm{ \comi y^{\ell+4}  \partial_y^5 f(s)  } _ { L^ { 2 } } ^ { 2 } ds \\
\leq     \eps \int_0^t\mathcal D(s)ds+C_\eps\Big(\mathcal E(0) +\int_0^t \inner{\mathcal E(s)+\mathcal E^2(s)} ds\Big).   
\end{multline*}
Observe the  above inequality still holds true if we replace $\comi y^{\ell+4}  \partial_y^4$ by $\comi y^{\ell+j} \partial_x^i \partial_y^j$ with $i+j\leq 4$  and  use the boundary conditions  \eqref{b3} and  $\partial_y f|_{y=0}=0$.  Since the argument is  straightforward we omit it for brevity.  The proof of Lemma \ref{pro1} is completed.
\end{proof}

\subsection{Completing the proof of the energy estimate}  Combining the estimates in Lemmas \ref{pro2}--\ref{pro1} and letting $\eps$ be small enough we obtain the desired energy estimate, completing the proof of Theorem \ref{thma}.

\bigskip
\noindent {\bf Acknowledgements.} The work was supported by NSF of China(Nos. 11961160716, 11871054, 11771342) and   the Natural Science Foundation of Hubei Province (No. 2019CFA007).



\begin{thebibliography}{10}

\bibitem{MR3327535}
R.~Alexandre, Y.-G. Wang, C.-J. Xu, and T.~Yang.
\newblock Well-posedness of the {P}randtl equation in {S}obolev spaces.
\newblock {\em J. Amer. Math. Soc.}, 28(3):745--784, 2015.

\bibitem{MR3795028}
D.~Chen, Y.~Wang, and Z.~Zhang.
\newblock Well-posedness of the linearized {P}randtl equation around a
  non-monotonic shear flow.
\newblock {\em Ann. Inst. H. Poincar\'{e} Anal. Non Lin\'{e}aire},
  35(4):1119--1142, 2018.

\bibitem{MR2763329}
H.~Chen, W.-X. Li, and C.-J. Xu.
\newblock Gevrey hypoellipticity for a class of kinetic equations.
\newblock {\em Comm. Partial Differential Equations}, 36(4):693--728, 2011.

\bibitem{MR3925144}
H.~Dietert and D.~G\'{e}rard-Varet.
\newblock Well-posedness of the {P}randtl equations without any structural
  assumption.
\newblock {\em Ann. PDE}, 5(1):Paper No. 8, 51, 2019.

\bibitem{MR1476316}
W.~E and B.~Engquist.
\newblock Blowup of solutions of the unsteady {P}randtl's equation.
\newblock {\em Comm. Pure Appl. Math.}, 50(12):1287--1293, 1997.

\bibitem{MR2601044}
D.~G\'{e}rard-Varet and E.~Dormy.
\newblock On the ill-posedness of the {P}randtl equation.
\newblock {\em J. Amer. Math. Soc.}, 23(2):591--609, 2010.

\bibitem{MR3429469}
D.~Gerard-Varet and N.~Masmoudi.
\newblock Well-posedness for the {P}randtl system without analyticity or
  monotonicity.
\newblock {\em Ann. Sci. \'{E}c. Norm. Sup\'{e}r. (4)}, 48(6):1273--1325, 2015.

\bibitem{MR2849481}
Y.~Guo and T.~Nguyen.
\newblock A note on {P}randtl boundary layers.
\newblock {\em Comm. Pure Appl. Math.}, 64(10):1416--1438, 2011.

\bibitem{MR3461362}
M.~Ignatova and V.~Vicol.
\newblock Almost global existence for the {P}randtl boundary layer equations.
\newblock {\em Arch. Ration. Mech. Anal.}, 220(2):809--848, 2016.

\bibitem{MR3284569}
I.~Kukavica, N.~Masmoudi, V.~Vicol, and T.~K. Wong.
\newblock On the local well-posedness of the {P}randtl and hydrostatic {E}uler
  equations with multiple monotonicity regions.
\newblock {\em SIAM J. Math. Anal.}, 46(6):3865--3890, 2014.

\bibitem{lmy}
W.-X. {Li}, N.~{Masmoudi}, and T.~{Yang}.
\newblock {Well-posedness in Gevrey function space for 3D Prandtl equations
  without Structural Assumption}.
\newblock {\em Comm. Pure Appl. Math. {\rm doi:10.1002/cpa.21989}}.

\bibitem{MR3493958}
W.-X. Li, D.~Wu, and C.-J. Xu.
\newblock Gevrey class smoothing effect for the {P}randtl equation.
\newblock {\em SIAM J. Math. Anal.}, 48(3):1672--1726, 2016.

\bibitem{2020arXiv200906513L}
W.-X. {Li} and T.~{Yang}.
\newblock {Well-posedness of the MHD boundary layer system in Gevrey function
  space without Structural Assumption}.
\newblock {\em SIAM Journal on Mathematical Analysis, {\rm to appear.
  arXiv:2009.06513}}.

\bibitem{MR4055987}
W.-X. Li and T.~Yang.
\newblock Well-posedness in {G}evrey function spaces for the {P}randtl
  equations with non-degenerate critical points.
\newblock {\em J. Eur. Math. Soc. (JEMS)}, 22(3):717--775, 2020.

\bibitem{MR4102162}
C.-J. Liu, D.~Wang, F.~Xie, and T.~Yang.
\newblock Magnetic effects on the solvability of 2{D} {MHD} boundary layer
  equations without resistivity in {S}obolev spaces.
\newblock {\em J. Funct. Anal.}, 279(7):108637, 45, 2020.

\bibitem{MR3975147}
C.-J. Liu, F.~Xie, and T.~Yang.
\newblock Justification of {P}randtl ansatz for {MHD} boundary layer.
\newblock {\em SIAM J. Math. Anal.}, 51(3):2748--2791, 2019.

\bibitem{MR3882222}
C.-J. Liu, F.~Xie, and T.~Yang.
\newblock M{HD} boundary layers theory in {S}obolev spaces without monotonicity
  {I}: {W}ell-posedness theory.
\newblock {\em Comm. Pure Appl. Math.}, 72(1):63--121, 2019.

\bibitem{MR3385340}
N.~Masmoudi and T.~K. Wong.
\newblock Local-in-time existence and uniqueness of solutions to the {P}randtl
  equations by energy methods.
\newblock {\em Comm. Pure Appl. Math.}, 68(10):1683--1741, 2015.

\bibitem{MR1697762}
O.~A. Oleinik and V.~N. Samokhin.
\newblock {\em Mathematical models in boundary layer theory}, volume~15 of {\em
  Applied Mathematics and Mathematical Computation}.
\newblock Chapman \& Hall/CRC, Boca Raton, FL, 1999.

\bibitem{MR2308505}
M.~Ruzhansky and V.~Turunen.
\newblock On the {F}ourier analysis of operators on the torus.
\newblock In {\em Modern trends in pseudo-differential operators}, volume 172
  of {\em Oper. Theory Adv. Appl.}, pages 87--105. Birkh\"{a}user, Basel, 2007.

\bibitem{MR2020656}
Z.~Xin and L.~Zhang.
\newblock On the global existence of solutions to the {P}randtl's system.
\newblock {\em Adv. Math.}, 181(1):88--133, 2004.

\bibitem{MR3710703}
C.-J. Xu and X.~Zhang.
\newblock Long time well-posedness of {P}randtl equations in {S}obolev space.
\newblock {\em J. Differential Equations}, 263(12):8749--8803, 2017.

\bibitem{MR3464051}
P.~Zhang and Z.~Zhang.
\newblock Long time well-posedness of {P}randtl system with small and analytic
  initial data.
\newblock {\em J. Funct. Anal.}, 270(7):2591--2615, 2016.

\end{thebibliography}

\end{document}